\newtheorem{theorem}{Theorem}[section]
\newtheorem{remark}[theorem]{Remark}
\newtheorem{proposition}[theorem]{Proposition}
\def\thm@space@setup{%
	\thm@preskip=\parskip \thm@postskip=0pt
}
\newcommand{\N}{\ensuremath{\mathbb{N}}}
\newcommand{\R}{\ensuremath{\mathbb{R}}}
\newcommand{\E}{\ensuremath{\mathbb{E}}}
\newcommand{\W}{\ensuremath{\mathcal{W}}}
\newcommand{\supp}{\textnormal{supp}}
\newcommand{\dd}{\,\mathrm{d}}
\renewcommand{\tt}{\mathrm{T}}
\newcommand{\KL}{\mathrm{KL}}
\newcommand{\eps}{\varepsilon}
\newcommand{\C}{\mathcal{C}}
\newcommand{\FFT}{\mathrm{FFT}}
\newcommand{\IFFT}{\mathrm{IFFT}}
\newcommand{\zero}{\mathbb{0}}
\newcommand{\one}{\mathbb{1}}
\renewcommand{\vec}{\mathrm{vec}}
\newcommand{\imag}{\mathrm{i}}
\newcommand{\e}{\mathrm{e}}
\newcommand{\conv}{\mathrm{conv}}
\newcommand{\id}{\mathrm{Id}}
\DeclareMathOperator*{\argmin}{argmin}
\DeclareMathOperator*{\diag}{diag}
\DeclareMathOperator{\dist}{dist}
\DeclareMathOperator*{\Var}{Var}
\DeclareMathOperator*{\Real}{Re}
\DeclareMathOperator*{\Imag}{Im}
\DeclareMathOperator{\OT}{OT}
\numberwithin{equation}{section} 
\newif\iflong
\title{Generalized Iterative Scaling for Regularized Optimal Transport with Affine Constraints: Application Examples}
\author{
	Johannes von Lindheim\thanks{
		Institute of Mathematics,
		Technische Universit\"at Berlin,
		Strasse des 17. Juni 136, 10587 Berlin, Germany,
		vonlindheim@tu-berlin.de/steidl@math.tu-berlin.de
	},
	Gabriele Steidl\footnotemark[1]
}
\date{March 14, 2023}
\begin{document}

\maketitle

\begin{abstract}
We demonstrate the relevance of an algorithm called generalized iterative scaling (GIS) or simultaneous multiplicative algebraic reconstruction technique (SMART) and its rescaled block-iterative version (RBI-SMART) in the field of optimal transport (OT).
Many OT problems can be tackled through the use of entropic regularization by solving the Schr\"odinger problem, which is an information projection problem, that is, with respect to the Kullback--Leibler divergence.
Here we consider problems that have several affine constraints.
It is well-known that cyclic information projections onto the individual affine sets converge to the solution.
In practice, however, even these individual projections are not explicitly available in general.
In this paper, we exchange them for one GIS iteration.
If this is done for every affine set, we obtain RBI-SMART.
We provide a convergence proof using an interpretation of these iterations as two-step affine projections in an equivalent problem.
This is done in a slightly more general setting than RBI-SMART, since we use a mix of explicitly known information projections and GIS iterations.
We proceed to specialize this algorithm to several OT applications.
First, we find the measure that minimizes the regularized OT divergence to a given measure under moment constraints.
Second and third, the proposed framework yields an algorithm for solving a regularized martingale OT problem, as well as a relaxed version of the barycentric weak OT problem.
Finally, we show an approach from the literature for unbalanced OT problems.

\end{abstract}

\section{Introduction}

One of the key achievements in computational optimal transport (OT) is entropic regularization \cite{entropicReg12L}.
This modified problem can be solved by the celebrated Sinkhorn algorithm \cite{lightspeed13C}.
A large part of its success can likely be attributed to its remarkable simplicity.
Regularized OT is the so-called Schr\"odinger problem in a rewritten form, which is nothing more but the information projection problem
\begin{equation}\label{eq:schroedinger}
\min_{\pi\in \C} \KL(\pi, K),
\end{equation}
where $K_{ij}\coloneqq \exp(-c_{ij}/\eps)$ is the so-called Gibbs kernel, and $\C$ is the intersection of the affine subspaces encoding the marginal constraints of OT.
This problem is solved iteratively by performing alternating information projections onto the individual affine subspaces, which can be done in closed form and is also called iterative proportional fitting.
Subsequently, more Sinkhorn-like algorithms appeared for many other problems from OT, such as OT barycenters, multi-marginal OT, partial OT and more, since they are all just variants of the information projection above \cite{IBP15BCC}.
This strategy, however, is limited to constraints where the affine subspaces are axis-parallel hyperplanes or halfspaces, that is, the corresponding matrices are row-vectors containing only zeros and ones.
This is because the information projection to a general affine subspace cannot be explicitly computed, not even to one affine hyperplane.

Nonetheless, the projection to an affine subspace can be solved for iteratively using an algorithm called generalized iterative scaling (GIS) \cite{gis72DR} or simultaneous multiplicative algebraic reconstruction technique (SMART) \cite{smart93B}.
It has been related to the Sinkhorn algorithm \cite{toricMLE21ACKRS} and has been employed for finding OT plans with certain moment constraints, e.g. for solving unbalanced OT problems with non-exact marginal constraints \cite{toric22STVR}, see also Section~\ref{sec:unbalanced}.
In this paper, while cycling through our given affine subspaces, we perform only one iteration of this algorithm instead of a whole inner loop, which is more efficient.
If this is done with every subspace, this is the well-known rescaled block-iterative version RBI-SMART \cite{accelerating98B}.
Here we make a slight generalization that fits well into the context of OT: As one would expect, it is possible to mix the GIS steps towards the affine spaces with any explicitly known $\KL$ projections directly onto the affine space.
Our proof in a mild extension of an interpretation of the GIS iteration as a two-step $\KL$-projection in an equivalent problem \cite{geomintDR89C}.

We proceed to give several examples of OT problems where the framework above can be applied.
First, we consider the problem of minimizing the $\OT_\eps$ divergence to a given measure, subject to moment constraints, such as statistical or Fourier moments.
The algorithm specialized to this problem can be given in a simple dual form similar to the Sinkhorn algorithm.
This is more memory-efficient and uses only matrix-vector multiplications with the Gibbs kernel $K$, which can be computed using the fast Fourier transform.
Secondly, we consider the problem of martingale OT from mathematical finance, which is the standard OT problem with the additional affine constraint that the mean of the target locations of each source point should be the source point itself.
Our strategy yields a simple algorithm for this problem as well.
Third, we consider barycentric weak OT.
Here, the costs are a function of the source and mean target locations of each source point.
In order to apply our general algorithm, we first introduce a second auxiliary OT plan and relax the problem to a linear programming problem using Jensen's inequality.
We check that the error of this relaxation is controlled by the choice of resolution in the auxiliary plan.
The efficacy of each of these algorithms is illustrated numerically on toy data.
Finally, we consider an approach presented in \cite{toric22STVR} for unbalanced OT problems.


This paper is organized as follows:
In Section~\ref{sec:prelim_notation}, we fix our main notation and briefly introduce optimal transport.
In Section~\ref{sec:iproj_affine}, we consider the problem of projecting a probability distribution onto (intersections of) affine sets, introduce and motivate our main algorithm and prove its convergence.
Next, we apply it to several OT problems in Section~\ref{sec:applications}.
We conclude by mentioning more possible examples for applications and several directions for future research in Section~\ref{sec:conclusion}.


\section{Preliminaries and Notation}\label{sec:prelim_notation}

In this section, we fix our main notation and briefly introduce optimal transport.

\subsection{Notation}\label{sec:notation}

In the following, we denote by $\Vert \cdot \Vert$ the Euclidean norm on $\R^d$ and by
$\mathcal P(\R^d)$ the space of probability measures on $\mathbb R^d$.
The symbols $\smash{\R^M_{\geq 0}} = \{ x\in \R^M: x \geq 0 \}$ and $\smash{\R^M_{>0}} = \{ x\in \R^M: x > 0 \}$ are the nonnegative and positive orthant, respectively.
Let $\Delta_M = \{ x\in \smash{\R^M_{\geq 0}} : \smash{\sum_{j=1}^M} x_j = 1\}$ be the closed $(M-1)$-dimensional probability simplex.
We denote by $\zero_M\in \R^M$, $\zero_{M\times M}\in \R^{M\times M}$, $\one_M\in \R^M$ or $\one_{M\times M}\in \R^{M\times M}$ a vector or matrix of $M$ or $M\times M$ zeros and ones, respectively.
We may write $\zero$ or $\one$ if the dimension is clear from the context.
For $a, b\in \R^M$, we denote by $a \odot b \in \R^M$ element-wise multiplication and by $a/b=\frac a b \in \R^M$ element-wise division.
We define the Kullback--Leibler divergence $\KL\colon \R^M\times \R^M\to \R\cup \{ \infty \}$ as
\begin{equation}
\KL(p, q) \coloneqq \begin{cases}
\sum_{i=1}^M p_i\log \frac{p_i}{q_i} - p_i + q_i, &  0\leq p, q \text{ and } p\ll q \\
\infty, & \text{otherwise}
\end{cases}
\end{equation}
with the convention $0\log 0=0\log \frac 0 0 =0$.
We denote the row-wise vectorization of $\pi\in \R^{M\times M}$ by $\vec(\pi) = (\pi_{1, 1}, \pi_{1, 2}, \dots, \pi_{1, M}, \pi_{2, 1}, \dots, \pi_{M, M-1}, \pi_{M, M})\in \R^{M\cdot M}$.
We use the symbol ``$:$'' as an index to get a certain row or column of a matrix, for example $\pi_{i, :}\in \R^{1\times M}$ for the $i$-th row of $\pi\in \R^{M\times M}$ or $A_{:, j}\in \R^{m\times 1}$ for the $j$-th column of $A\in \R^{m\times M}$, respectively.


%
%
%

\subsection{Optimal Transport}

Let $1\leq p<\infty$.
Assume that we are given two discrete probability measures
\begin{equation}
	\mu = \sum_{i=1}^M \mu_i \delta (x_i), \quad 
	\nu = \sum_{j=1}^M \nu_j \delta (y_j), 
\end{equation}
where we abuse notation throughout this paper by identifying a probability measure with its weights.
Then the Monge--Kantorovich formulation of optimal transport is
\begin{equation}\label{eq:monge_kantorovich}
	\OT(\mu, \nu) 
	= \min_{\pi\in\Pi(\mu, \nu)} \sum_{i,j=1}^M c_{ij} \pi_{ij},
\end{equation}
where $c_{ij} = c(x_i, y_i)$ with $c\colon \R^d\times \R^d\to \R$ denotes some cost function and
\begin{equation}
	\Pi(\mu, \nu) \coloneqq \{ \pi\in \mathcal P(\R^d\times \R^d): \sum_{j=1}^M \pi_{ij} = \mu_i,  \sum_{i=1}^M \pi_{ij} = \nu_j \}
\end{equation}
is the set of probability measures on $\R^d \times \R^d$ with prescribed marginals $\mu$ and $\nu$.
When $c(x, y) = \Vert x-y\Vert^p$, then $\OT(\mu, \nu)=\W^p_p(\mu, \nu)$ defines the Wasserstein-$p$ distance $\W_p$.
The above optimization problem is convex, but can have multiple minimizers $\pi$.
The problem becomes strictly convex, guaranteeing a unique solution, if entropic regularization is applied:
\begin{equation}\label{eq:entropic_reg}
\OT_\eps(\mu, \nu) \coloneqq \min_{\pi\in \Pi(\mu, \nu)} \sum_{i,j=1}^M c_{ij} \pi_{ij} - \eps E(\pi),
\end{equation}
where
\begin{equation}
E(\pi)
= -\sum_{i, j=1}^M \pi_{ij} (\log(\pi_{ij}) - 1)
\end{equation}
with the convention $0\log 0=0$.
If $\hat\pi$ is a unique solution of \eqref{eq:monge_kantorovich} and $\hat\pi_\eps$ solves \eqref{eq:entropic_reg}, then $\hat\pi_\eps\to \hat\pi$ for $\eps\to 0$.
Entropic regularization is motivated by computational reasons as well.
Problem \eqref{eq:entropic_reg} can be rewritten as the Schr\"odinger problem \eqref{eq:schroedinger}, where $\C=\C^1\cap \C^2 \cap \Delta_{M\times M}$ with the two affine sets
\begin{equation}\label{eq:marg_constraints}
	\C^1 \coloneqq \{ \pi\in \R^M\times\R^M: \pi\one =\mu \}, \quad 
	\C^2 \coloneqq \{ \pi\in \R^M\times\R^M: \pi^\tt\one =\nu \}.
\end{equation}
This problem is generalized to many other OT applications such as Wasserstein barycenters, multi-marginal OT, OT with inequality constraints and more, see \cite{IBP15BCC}.
The solutions are always $\KL$-projections of some $K$ as in \eqref{eq:schroedinger}, but in possibly different domains than $\R^M\times\R^M$ and with other affine constraint sets $\C$.
So far, however, these affine constraints are usually limited to the case that they are defined by matrices containing only zeros and ones.
In the following section, we generalize this framework to affine spaces given by arbitrary matrices.


\section{Information Projection onto Affine Subspaces}\label{sec:iproj_affine}

Denote by
\begin{equation}
\C(A, b) \coloneqq \{ x\in \Delta_M : A x = b, A\in \R^{m\times M}, b\in \R^m \}
\end{equation}
an affine subset of the probability simplex.
Assume that we are given $n$ affine sets $\C^k\coloneqq \C(A^k, b^k)$, $A^k\in \R^{m_k\times M}, b^k\in \R^{m_k}$, $k=1, \dots, n$.
We set $\C\coloneqq \C^1\cap \dots \cap \C^n$.
For a given $q\in \R^M$, we consider the problem
\begin{equation}\label{eq:problem}
\min_{p\in \C} \KL(p, q).
\end{equation}
In what follows, we assume without loss of generality that $q>0$, since $q_j=0$ implies $p_j=0$ for any solution $p$ of \eqref{eq:problem}.
In the applications in Section~\ref{sec:applications}, we are mainly interested in solving some variants of the special case \eqref{eq:schroedinger}, where $q$ is the (vectorized) Gibbs kernel.

\subsection{Iterative Information Projections}\label{sec:iterative_iproj}

Since $\C^k$, $k=1, \dots, n$, are affine sets, it is well-known that problem~\eqref{eq:problem} can be solved by iterative information projections as follows:
\begin{equation}\label{eq:ibp_primal}
p^{(0)} \coloneqq q, \qquad
p^{(k)} \coloneqq P^\KL_{\C^k}(p^{(k-1)}) \quad \text{for all } k\in \N,
\end{equation}
where we extend our indexing periodically by setting $\C^{k+nl} \coloneqq \C^k$ for $k=1, \dots, n$, $l\in \N$, and similarly for $A^k$, $b^k$.

In the case that $A^k\in \{ 0, 1 \}^{1\times M}$, $b^k\in \R$, it is well-known, and not hard to check using the first-order optimality conditions, that $P^\KL_{\C^k}$ is given by scaling as follows:
\begin{equation}\label{eq:scaling}
	P^\KL_{\C^k}(p)_j
	= \begin{cases}
	p_j \cdot \frac{b^k}{A^k p}, & A_j = 1 \\
	p_j, & A_j = 0.
	\end{cases}
\end{equation}
This case occurs frequently in OT when $p$ is a measure on a product space and the $A^k$ are constraints on the marginals of $p$.

Unfortunately, there seems to be no closed-form expression for the projection to an affine subspace in general.
However, given $A^k$ and $b^k$, it can be computed iteratively using the GIS \cite{gis72DR} or SMART \cite{smart93B} algorithm and its rescaled block-iterative version RBI-SMART \cite{accelerating98B}, which we introduce in the subsequent section.
It has been related to the Sinkhorn algorithm \cite{toricMLE21ACKRS} and has been employed for finding OT plans with certain moment constraints, e.g. for solving unbalanced OT problems with non-exact marginal constraints \cite{toric22STVR}, see also Section~\ref{sec:unbalanced}.
Finally, we remark that if the constraint sets are not affine, but more general convex subsets with known information projections, the projection to their intersection can be determined with Dijkstra's algorithm \cite{dykstra83}.
This has been generalized to general Bregman projections, see \cite{dykConvergence00BL}.

\subsection{Generalized Iterative Scaling}\label{sec:gis}

We first consider the case where $n=1$.
Then given some positive vector $q\in \R^M_{>0}$, a matrix $A\in \R^{m\times M}$ and $b\in \R^m$, we are concerned with the following problem:
\begin{equation}\label{eq:ip_affine_problem}
\min_{p\in \Delta_M} \KL(p, q) \quad \text{such that}\quad Ap=b.
\end{equation}
Since the constraints are affine and $\KL$ is strictly convex in its first argument, if 
$\C(A, b)$ is nonempty, then there exists a unique minimizer.

GIS is a simple iterative procedure that can be viewed (i) as iterative information projections onto affine subsets \cite{geomintDR89C}, (ii) as a so-called mirror descent scheme for a particular objective and fixed step length \cite[Rem.~1]{mirrorDescent13PSBL}, or (iii) as an MM-algorithm on the dual formulation of \eqref{eq:ip_affine_problem}, see \cite{GISisMM19ST}.
In addition to providing an approximate solution of \eqref{eq:ip_affine_problem}, this algorithm is applicable to maximum-likelihood estimation for the exponential family \cite{geomintDR89C}.

Note that GIS is the more general SMART algorithm in our special case that no further regularization is applied, i.e., the case $\alpha=1$ in \cite{smart93B}.
Several other variants and extensions of SMART are also available, e.g. its rescaled, block-iterative version RBI-SMART \cite{accelerating98B} and BI-SMART for inconsistent affine constraints \cite{bismartinconsistent97B}, that is, when $\C=\emptyset$.
Another version of (RBI-)SMART with a weighting had been previously considered for the special case of entropy minimization in \cite{blockitentr87CS}.
An extension to more general Bregman divergences is discussed in \cite{blockitBregman02CH}.
For simplicity, we will mainly use the name GIS in what follows, since this corresponds to our special case of \eqref{eq:ip_affine_problem} with $\C\neq\emptyset$ and without further regularization.
%

In order to guarantee convergence, we assume that there exists a $p\in \Delta_M$ with $Ap=b$, and that
\begin{equation}\label{eq:gis_congerence_conditions}
	A, b \geq 0, \quad
	\sum_{i=1}^m A_{ij} = \one_m^\tt A = \one_M, \quad
	\sum_{i=1}^m b_i = \one_m^\tt b = 1.
\end{equation}
If the conditions \eqref{eq:gis_congerence_conditions} are not fulfilled, we can modify $A$ and $b$ so that these assumptions hold without altering the corresponding affine subspace according to the following steps:
\begin{enumerate}
	\item The domain constraint $p\in \Delta_M$ just means $p \geq 0$ together with the affine constraint $\one^\tt p = 1$.
	The former will automatically be satisfied since $\KL(p, q)=\infty$ if $p\not\in \smash{\R^M_{\geq 0}}$.
	Scaling the equations or adding the multiple of one constraint to another does not change the subspace.
	Thus, setting $\xi \coloneqq \min\{\min_{ij} \smash{A_{ij}}, \min_i \smash{b_i}\}$, modify
	\begin{equation}
	A^{(1)}\coloneqq A - \xi, \qquad
	b^{(1)}\coloneqq b - \xi,
	\end{equation}
	where the addition is meant element-wise.
	This ensures that $A^{(1)},$ $b^{(1)}\geq 0$ whilst retaining $\C(A, b) = \smash{\C(A^{(1)}, b^{(1)})}$.
	\item We set
	\begin{equation}
	\xi' \coloneqq \max\{ \max_j (\one_m^\tt A^{(1)})_j, \one_m^\tt b^{(1)} \}, \qquad
	A^{(2)} \coloneqq \frac{1}{\xi'}A^{(1)}, \quad b^{(2)} \coloneqq \frac{1}{\xi'}b^{(1)}.
	\end{equation}
	This guarantees that $\one_m^\tt A^{(2)},$ $\one_m^\tt b^{(2)}\leq 1$, and still $\C(A, b) = \smash{\C(A^{(2)}, b^{(2)})}$.
	\item Finally, adding or removing a linearly dependent constraint does not change the subspace either.
	We remove any rows $i$ for which $A^{(2)}_{ij}, b^{(2)}_i = 0$ for all $j=1, \dots, M$ and then set
	\begin{equation}
	A^{(3)} \coloneqq
	\begin{bmatrix}
	A^{(2)} \\
	1-\one_m^\tt A^{(2)}
	\end{bmatrix}, \quad
	b^{(3)} \coloneqq
	\begin{bmatrix}
	b^{(2)} \\
	1-\one_m^\tt b^{(2)}
	\end{bmatrix},
	\end{equation}
	This ensures $\one_m^\tt A = \one_M$ and $\one_m^\tt b = 1$, and still $\C(A, b) = \smash{\C(A^{(3)}, b^{(3)})}$.
\end{enumerate}
Finally, after these preparations of the linear system, we state the GIS algorithm:
\begin{align}\label{eq:gis}
p^{(0)} \coloneqq q, \qquad
p^{(k)} &\coloneqq p^{(k-1)}\odot \exp\Big(A^\tt \log\frac{b}{Ap^{(k-1)}}\Big)\\
&= p^{(k-1)}\odot \prod_{i=1}^m \Big( \frac{b}{Ap^{(k-1)}} \Big)^{A_{i, :}}, \quad k\in \N,
\end{align}
where $\exp$ and $\log$ are meant element-wise.
When the constraints are such that $A\in \{ 0, 1 \}^{1\times M}$, then this reduces to the projection by scaling in \eqref{eq:scaling}.
If $\C(A, b)\neq \emptyset$, then for $k\to \infty$, it holds that $\smash{\lim_{k\to \infty} p^{(k)} = P^\KL_{\C(A, b)}(q)}$, see, e.g., \cite{geomintDR89C,smart93B}.
This is mainly due to the following improvement inequality, which is easy to verify: For any $p\in \C(A, b)$, in particular for $p=\smash{P^\KL_{\C(A, b)}(q)}$, it holds that
\begin{equation}\label{eq:improvement_smart}
	\KL(p, p^{(k-1)}) - \KL(p, p^{(k)}) \geq \KL(b, Ap^{(k-1)}) \geq 0.
\end{equation}
This is called Fej\'er monotonicity of the sequence $(p^{(k)})_{k\in \N}$ with respect to $\C(A, b)$.

While choosing $\xi$, $\xi'$ with larger absolute value than necessary is always possible, we observe numerically that this slows down the convergence.
This is consistent with \eqref{eq:improvement_smart}, as
this decreases $\KL(b, Ap^{(k-1)})$.
Since the conditions \eqref{eq:gis_congerence_conditions} are not always necessary for convergence, it is in practice often even possible to obtain faster convergence using $\xi$, $\xi'$ with smaller absolute values than required to fulfill these conditions.

Instead of making $A$ column-stochastic by adding a row as in step $3$, another possibility is to scale the linear system appropriately \cite{blockitentr87CS}.
Since this introduces scaling factors into the iteration, we stick to the method above for convenience here.
We observed slower convergence numerically in some cases, but the impact of this choice on performance is not entirely clear.
We leave this matter for future research.

For an example of the algorithm's iterations, see Figure~\ref{fig:inf_proj_2d}.
\begin{figure}[htb]
	\centering
	\includegraphics[width=.55\textwidth]{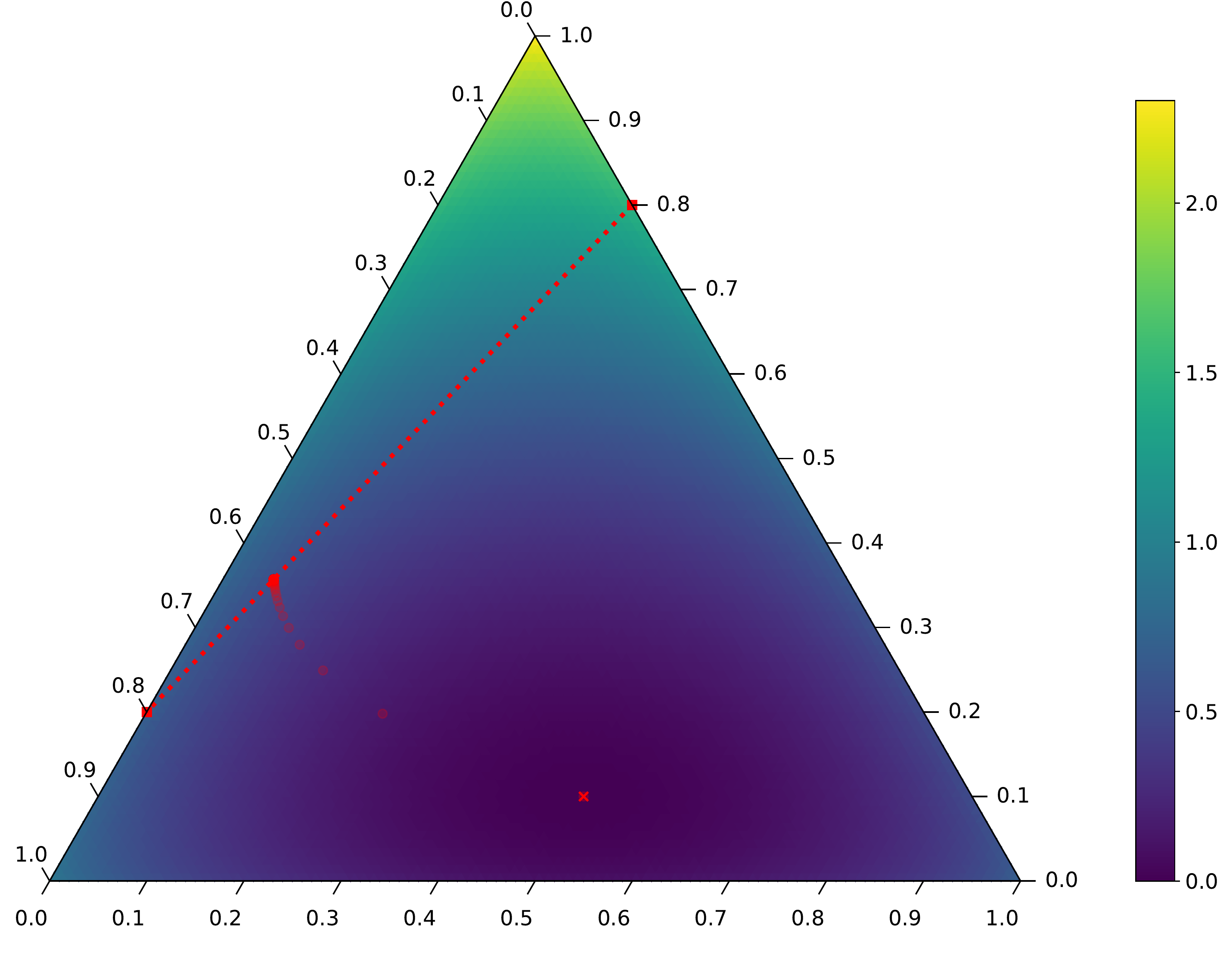}
	\caption{An illustration of $100$ iterations of GIS in \eqref{eq:gis} on the probability simplex with barycentric coordinates.
		In this example, we chose $M=3$ and $q=(0.5, 0.1, 0.4)$, which is indicated by the red ``x''.
		The red dotted line illustrates the affine constraint $\langle(0.1, 0.5, 0.4)^\tt, p\rangle = 0.42$, whereas the red transparent bullets indicate the trajectory of the iterations.
		The colormap indicates $\KL(\cdot, q)$.
		Note that the constraint that $p\in \Delta_M$ is only approximately fulfilled during the runtime of the algorithm, such that for this visualization, the displayed points are $p^{(k)}/\Vert p^{(k)}\Vert_1$, $k=0, \dots, 99$.}
	\label{fig:inf_proj_2d}
\end{figure}





\subsection{Swapping Projections for GIS Iterations}\label{sec:gis_swap}

We come back to the original problem~\eqref{eq:problem}, where $n$ may be larger than $1$.
We consider the situation that the projection $P^\KL_{\C^k}$ is unknown precisely for some $k\in I_\text{GIS}\subset \{ 1, \dots, n \}$.
Examples for this setting are given in Section~\ref{sec:applications}, where the known projections come from some marginal constraints and are of the form \eqref{eq:scaling}.
An ad hoc approach for this setting would be the following:
\begin{align}
	p^{(0)} &\coloneqq q \\
	p^{(k)} &\coloneqq P^\KL_{\C^k}(p^{(k-1)}) \quad \text{if } k \not\in I_\text{GIS}+n\N_0, \text{ and otherwise }\\
	p^{(k, 0)} &\coloneqq p^{(k-1)}, \quad
	p^{(k, t)} \coloneqq p^{(k, t-1)}\odot \exp(A^\tt \log(b / Ap^{(k, t-1)})) \text{ until converged in } t,
\end{align}
thus having an inner loop over $t$ for all $k\in I_\text{GIS}\subset \{ 1, \dots, n \}$.
If the $\smash{P^\KL_{\C^k}}$ are easy to compute, this is slow, since many iterations in the inner loop are having small effect if $p^{(k, t)}$ is close to convergence in $t$.
Instead, for $k \in I_\text{GIS}+n\N_0$, we propose to do only one GIS iteration in the inner loop, which leads to Algorithm~\ref{alg:gis_pkl_mix}.
\begin{algorithm}[htb]
	\begin{algorithmic}
		\State \textbf{Input:} $q\in \R^M_{>0}$, $I_\text{GIS}\subset \{ 1, \dots, n \}$, $A^k\in \R^{m_k\times M}$, $b^k\in \R^M$ for $k\in I_\text{GIS}$, $P^\KL_{\C^k}$ for $k\not\in I_\text{GIS}$
		\For{$k=1, \dots, N$}
			\State Normalize $A^k$, $b^k$ as described by the steps outlined in Section~\ref{sec:gis}
		\EndFor
		\State $p^{(0)}\coloneqq q$
		\For{$k=1, 2, \dots$}
		\State \begin{equation}
		p^{(k)} \coloneqq
		\begin{cases}
		p^{(k-1)}\odot\exp((A^k)^\tt \log \frac{b^k}{A^k p^{(k-1)}}), & k\in I_\text{GIS}+n\N_0 \\
		P^\KL_{\C^k}(p^{(k-1)}), & k\not\in I_\text{GIS}+n\N_0.
		\end{cases}
		\end{equation}
		\EndFor
		\caption{Iterative information projections with GIS iterations to compute $P^\KL_\C(q)$}
		\label{alg:gis_pkl_mix}
	\end{algorithmic}	
\end{algorithm}


If a GIS iteration is done in each step, that is, $I_\text{GIS} = \{1, \dots, n \}$, this algorithm is the RBI-SMART algorithm for \eqref{eq:problem}.
Thus, our algorithm is slightly more general, since we can use the exact projection instead of the iteration \eqref{eq:gis}, whenever it is available.
This makes more sense intuitively from a performance point of view.
While we only consider examples where the exact projections are iterative scaling and hence equal to GIS iterations as well, this is not always the case.
Relevant examples include Sinkhorn barycenters and OT with certain inequality constraints, see \cite[Secs.~3.2, 5.1, 5.3]{IBP15BCC}.

\subsection{Advantages of Block-Structure}\label{sec:block_advantages}

One natural question is in order: If all conditions are affine, what is the gain of considering multiple groups of constraints $A^1p=b^1, \dots, A^n p=b^n$ instead of stacking these constraints into one large system $Ap=b$?
First of all, in our considered examples from OT, the constraints come naturally in distinct groups already, and by merging these groups, one might lose explicit knowledge of the corresponding information projection and obtain a different, slower algorithm.
Secondly, the motivation of RBI-SMART is indeed to accelerate SMART \cite{accelerating98B}.

We illustrate this using the following example.
Consider the case of regularized OT \eqref{eq:schroedinger}, where $\C=\C^1\cap \C^2$, and $\C^1$, $\C^2$ are defined in \eqref{eq:marg_constraints}.
With $p=\vec(\pi)$, the corresponding linear systems are given by
\begin{align}
A^1
&\coloneqq \begin{bmatrix}
1 & \dots & 1 &&&&&&&\\
&&& 1 & \dots & 1 &&&& \\
&&&&&& \ddots &&& \\
&&&&&&& 1 & \dots & 1
\end{bmatrix}
= \id_M \otimes \one_M^\tt, \quad
b^1 \coloneqq \mu, \\
A^2 &\coloneqq \begin{bmatrix}
1 && & 1 && & & 1 && \\
& \ddots && & \ddots && \dots && \ddots & \\
&& 1 & && 1 & &&& 1
\end{bmatrix}
= \one_M^\tt \otimes \id_M, \quad
b^2 \coloneqq \nu.
\end{align}
In this way, Algorithm~\ref{alg:gis_pkl_mix} is the simple Sinkhorn algorithm: Written in terms of $\pi$, for $l\in\N$, we have the iterates
\begin{equation}\label{eq:sinkhorn}
	\pi^{(0)} \coloneqq \exp(-c/\eps),\qquad
	\pi^{(2l-1)} \coloneqq P^\KL_{\C^1}(\pi^{(2l-2)}),\qquad
	\pi^{(2l)} \coloneqq P^\KL_{\C^2}(\pi^{(2l-1)}),
\end{equation}
where $\smash{P^\KL_{\C^1}}$, $\smash{P^\KL_{\C^2}}$ are computed explicitly by scaling as
\begin{equation}\label{eq:sinkhorn_projections}
	P^\KL_{\C^1}(\pi^{(2l-1)})_{ij} = \pi^{(2l-2)}_{ij} \cdot \frac{\mu_i}{\sum_{s=1}^M\pi^{(2l-2)}_{is}}, \qquad
	P^\KL_{\C^2}(\pi^{(2l)})_{ij} = \pi^{(2l-1)}_{ij} \cdot \frac{\nu_j}{\sum_{s=1}^M\pi^{(2l-1)}_{sj}}.
\end{equation}
On the other hand, to the best of our knowledge, $\smash{P^\KL_{\C}}$ is unknown.
Using the GIS algorithm with $\C$ directly, we use the linear system defined by
\begin{equation}
	A \coloneqq \frac 1 2 \begin{bmatrix}
	A^1 \\
	A^2
	\end{bmatrix}, \quad
	b \coloneqq \frac 1 2 \begin{bmatrix}
	b^1 \\
	b^2
	\end{bmatrix},
\end{equation}
after a rescaling of the corresponding equations with $\xi'=2$ has been made as in Section~\ref{sec:gis} to obtain a column-stochastic constraint matrix, see \eqref{eq:gis_congerence_conditions}.
Let $\tilde p = \vec(\tilde\pi)$ denote the iterates of this algorithm, then
\begin{equation}\label{eq:sinkhorn_gis}
	\tilde\pi^{(0)} \coloneqq \exp(-c/\eps),\qquad
	\tilde\pi^{(l)}_{ij} \coloneqq \tilde\pi^{(l-1)}_{ij}\bigg( \frac{\mu_i}{\sum_{s=1}^M\tilde\pi^{(l-1)}_{is}}\cdot \frac{\nu_j}{\sum_{s=1}^M\tilde\pi^{(l-1)}_{sj}} \bigg)^{\frac 1 2}, \quad
	l\in \N.
\end{equation}
While the algorithms in \eqref{eq:sinkhorn} and \eqref{eq:sinkhorn_gis} are quite similar, we expect \eqref{eq:sinkhorn} to be faster than \eqref{eq:sinkhorn_gis} because we expect bigger update steps for two reasons:
\begin{enumerate}[(i)]
	\item the absence of square roots in \eqref{eq:sinkhorn_projections} resulting from the normalization with $\xi'=2$ and
	\item usage of more ``up-to-date information'' in each half iteration.
\end{enumerate}
Indeed, reason (i) is more formally explained in \cite{accelerating98B} using the improvement inequalities analogously to \eqref{eq:improvement_smart}, which also helps to better understand (ii).
For the Sinkhorn algorithm, we get for every $l\in \N_0$, $k=1, 2$ and $p\in \C$ an improvement in each half iteration by
\begin{equation}
	\KL(p, p^{(2l+k)}) - \KL(p, p^{(2l+k+1)}) \geq \KL(b^k, A^k p^{(2l+k)}),
\end{equation}
such that summing over $k$, we obtain for one full iteration cycle that
\begin{equation}\label{eq:sinkhorn_cycle_improvement}
\KL(p, p^{(2l-2)}) - \KL(p, p^{(2l)}) \geq \KL(b^1, A^1 p^{(2l-2)}) + \KL(b^2, A^2 p^{(2l-1)}).
\end{equation}
In comparison, the lower bound on the improvement for algorithm \eqref{eq:sinkhorn_gis} is only
\begin{equation}\label{eq:sinkhorn_gis_cycle_improvement}
	\KL(b, A\tilde p^{(l)})
	= \KL\Big(\frac 1 2 \begin{bmatrix}
	b^1 \\
	b^2
	\end{bmatrix}, \frac 1 2 \begin{bmatrix}
	A^1 \\
	A^2
	\end{bmatrix} \tilde p^{(l)}\Big)
	= \frac 1 2 (\KL(b^1, A^1 \tilde p^{(l)}) + \KL(b^2, A^2 \tilde p^{(l)})).
\end{equation}
The additional factor $\frac 1 2$ in \eqref{eq:sinkhorn_gis_cycle_improvement} corresponds to reason (i), see also \cite{accelerating98B} for an explanation why RBI-SMART is faster than BI-SMART.
Note that a similar argument can be made in problems where the positivity condition in \eqref{eq:gis_congerence_conditions} needs to be established by adding a positive constant $\xi$.
Being able to choose $\xi_k$ individually for each block will yield a larger right hand side in the improvement inequality.

However, we claim that this does not yet explain the improvement of \eqref{eq:sinkhorn} over \eqref{eq:sinkhorn_gis} fully.
In other words, BI-SMART, which converges since $\sum_{i,j=1}^M A_{ij}\leq 1$ is sufficient, already yields an improvement over SMART without renormalization.
Note that in \eqref{eq:sinkhorn_cycle_improvement}, each term also has the most recent iterate $p^{(2l-2)}$ and $p^{(2l-1)}$, respectively, whereas each term in \eqref{eq:sinkhorn_gis_cycle_improvement} has the same iterate $\tilde p^{(l)}$.
Since $p^{(2l-2)}$ was computed as a projection onto $\C^2$ without considering $\C^1$, intuitively, the constraint violation $\KL(b^1, A^1 p^{(2l-2)})$ should be larger than $\KL(b^1, A^1 \tilde p^{(l)})$, where $A^1$ enters in each iteration.
The same is true for the odd iterates.

In fact, preliminary numerical evidence suggests that \eqref{eq:sinkhorn_gis} requires approximately four times as many iterations to converge to a predefined level of accuracy with respect to the marginal constraints compared to \eqref{eq:sinkhorn}, independent of the regularization parameter $\eps$, the discretization dimension $M$ or the accuracy, for all problems we considered.

Another observation was that \eqref{eq:sinkhorn_gis} still converges, when the exponent $1/2$ is exchanged to $0.99$ (while $1$ does not work).
Consistent with our derivation above, the necessary number of iterations to reach a predefined level of accuracy is around half compared to previously.
We conclude that around half of the total improvement of the factor four can be attributed to each reason (i) and (ii), respectively.

\subsection{Convergence}

Having motivated Algorithm~\ref{alg:gis_pkl_mix}, we turn to showing its convergence.
Following the approach in \cite{geomintDR89C}, we will interpret the GIS iterations as two-step projections to affine sets, such that \cite[Thm.~3.2]{idivgeometry75C} applies.
Although our proof is quite similar to \cite{geomintDR89C}, the correct choice of the corresponding equivalent problem has to be done carefully in this more general setting.
\begin{theorem}\label{thm:gis_pkl_mix_convergence}
	Let $q\in \R^M_{>0}$, $\C=\cap_{k=1}^n \C^k$ and $p^{(k)}$ be defined as in Algorithm~\ref{alg:gis_pkl_mix} for all $k\in \N_0$.
	Then it holds
	\begin{equation}
	\lim_{k\to \infty} p^{(k)}
	= P^\KL_\C(q)
	= \argmin_{p\in \C} \KL(p, q).
	\end{equation}
\end{theorem}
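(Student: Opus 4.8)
\emph{Strategy.} I would follow Csiszár's device of realizing each GIS iteration as a pair of $\KL$-projections onto affine (``linear'') families in a larger product space, and then invoke the convergence theorem for cyclic $\KL$-projections onto finitely many linear families \cite[Thm.~3.2]{idivgeometry75C}. Throughout I assume, as the statement implicitly requires, that $\C\neq\emptyset$, and that the normalization of Section~\ref{sec:gis} has been carried out, so that each $A^k$ is column-stochastic and each $b^k$ a probability vector; this changes neither $\C^k$ nor $\C$ nor $P^\KL_\C(q)$.

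\emph{The equivalent problem.} I would work on $\Omega\coloneqq\{1,\dots,M\}\times\prod_{k\in I_\text{GIS}}\{1,\dots,m_k\}$, restricted to those $\omega=(j,(i_k)_{k\in I_\text{GIS}})$ with $A^k_{i_k,j}>0$ for all $k\in I_\text{GIS}$, and write $\mathrm{m}\colon P\mapsto\big(\sum_{(i_k)}P(j,(i_k))\big)_j$ for the marginal onto the first coordinate, a linear map onto $\R^M$. As reference take $Q(\omega)\coloneqq q_j\prod_{k\in I_\text{GIS}}A^k_{i_k,j}$, which is positive on $\Omega$ and satisfies $\mathrm{m}(Q)=q$ (using $\sum_i A^k_{i,j}=1$; note also that restricting to $\Omega$ loses no mass of product-form measures, since the dropped terms vanish). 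On $\Omega$ introduce the linear families $\mathcal{B}_k\coloneqq\{P:\ \text{the marginal of }P\text{ in coordinate }i_k\text{ equals }b^k\}$ and $\mathcal{M}_k\coloneqq\{P:\ P(\omega)/A^k_{i_k,j}\text{ depends on }\omega\text{ only through the coordinates other than }i_k\}$ for $k\in I_\text{GIS}$, and $\tilde\C^k\coloneqq\{P:\ \mathrm{m}(P)\in\C^k\}$ for $k\notin I_\text{GIS}$; all are linear families, and $Q\in\bigcap_{k\in I_\text{GIS}}\mathcal{M}_k$.

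\emph{Reducing the algorithm to cyclic projections and concluding.} The routine part is the following identification. The family $\mathcal{B}_k$ has the explicit $\KL$-projection $P\mapsto P(\omega)\,b^k_{i_k}/(\text{$i_k$-marginal of }P)_{i_k}$, and the first-order conditions for $P^\KL_{\mathcal{M}_k}$ — exactly as in the derivation of \eqref{eq:gis}, using column-stochasticity of $A^k$ — produce the GIS exponents $\prod_i(\cdot)^{A^k_{i,j}}$. A direct computation then shows that if $P$ has product form $P(\omega)=p_j\prod_{k\in I_\text{GIS}}A^k_{i_k,j}$, then $\big(P^\KL_{\mathcal{M}_k}\circ P^\KL_{\mathcal{B}_k}\big)(P)$ is again of product form with first marginal $p\odot\exp\big((A^k)^\tt\log(b^k/A^kp)\big)$, i.e.\ one GIS iteration applied to $p$; that $P^\KL_{\tilde\C^k}(P)$ is again of product form with first marginal $P^\KL_{\C^k}(p)$; and that the intermediate measure $P^\KL_{\mathcal{B}_k}(P)$ still lies in $\mathcal{M}_{k'}$ for all $k'\neq k$. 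Hence the cyclic $\KL$-projection sequence started at $Q$ that, within each period and in the order $k=1,\dots,n$, projects onto $\mathcal{B}_k$ then $\mathcal{M}_k$ for $k\in I_\text{GIS}$ and onto $\tilde\C^k$ for $k\notin I_\text{GIS}$, stays in product form at the end of every block, and its first marginals are precisely the iterates $p^{(k)}$ of Algorithm~\ref{alg:gis_pkl_mix}. Now the intersection $\bigcap_{k\in I_\text{GIS}}(\mathcal{B}_k\cap\mathcal{M}_k)\cap\bigcap_{k\notin I_\text{GIS}}\tilde\C^k$ is exactly the set of product-form measures whose first marginal lies in $\C$, hence nonempty; and on product-form measures the chain rule gives $\KL(P,Q)=\KL(\mathrm{m}(P),q)$, so the $\KL$-projection of $Q$ onto this intersection is the product-form measure $P^\star$ with $\mathrm{m}(P^\star)=P^\KL_\C(q)$, and $\KL(P^\star,Q)<\infty$ since $Q>0$ on $\Omega$. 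By \cite[Thm.~3.2]{idivgeometry75C} the cyclic iterates converge to $P^\star$, and applying the continuous linear map $\mathrm{m}$ yields $\lim_{k\to\infty}p^{(k)}=\mathrm{m}(P^\star)=P^\KL_\C(q)$.

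\emph{Main obstacle.} The individual ingredients — the closed forms of $P^\KL_{\mathcal{B}_k}$ and $P^\KL_{\tilde\C^k}$, the first-order conditions giving the GIS exponents, the chain rule for $\KL$ — are standard. The genuinely delicate step, already flagged in the text preceding the theorem, is choosing the equivalent problem correctly: the space $\Omega$, the reference $Q$, and the families $\mathcal{B}_k,\mathcal{M}_k,\tilde\C^k$ must be arranged so that each GIS step is genuinely a two-step $\KL$-projection, so that every projection in the cycle preserves the product structure of the remaining blocks (which is what makes the first marginals of the lifted iterates coincide with the $p^{(k)}$), and so that the intersection of the lifted families marginalizes back onto $\C$; one also has to restrict $\Omega$ to the common support of the $A^k$ to keep $Q$ strictly positive and to check that the hypotheses of \cite[Thm.~3.2]{idivgeometry75C} are met.
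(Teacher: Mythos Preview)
Your approach is essentially the paper's: lift to a product space, realize each GIS step as a two-step $\KL$-projection $(\mathcal{B}_k,\mathcal{M}_k)$, and invoke \cite[Thm.~3.2]{idivgeometry75C}. Your lifting is in fact slightly more economical than the paper's, since you only introduce extra coordinates for $k\in I_\text{GIS}$, whereas the paper lifts all $n$ coordinates and then only uses the extra structure for the GIS block.

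There is, however, a genuine gap. Your claim that the first marginal of $\big(P^\KL_{\mathcal{M}_k}\circ P^\KL_{\mathcal{B}_k}\big)(P)$ equals the GIS iterate $p'\coloneqq p\odot\exp\big((A^k)^\tt\log(b^k/A^kp)\big)$ is not correct as stated. In Csisz\'ar's framework the families are subsets of the probability simplex, so $P^\KL_{\mathcal{M}_k}$ returns a probability measure; carrying out the minimization you sketch gives a product-form measure with first marginal $c\,p'$ where $c=\big(\sum_j p'_j\big)^{-1}$, and in general $c\neq 1$ since the raw GIS update does not preserve $\Delta_M$. This is exactly the point the paper addresses with the constants $c^{(n)}$. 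Two things need to be said and are missing from your argument: (i) the extra factor $c$ cancels in the \emph{next} projection, because each of $\mathcal{B}_{k'}$, $\tilde\C^{k'}$ is contained in the simplex and $P^\KL$ onto such a set is invariant under positive scaling of its argument---this is what makes the lifted cycle track the algorithm; and (ii) to conclude $p^{(k)}\to P^\KL_\C(q)$ (rather than only $c^{(k)}p^{(k)}\to P^\KL_\C(q)$) you need $c^{(k)}\to 1$, which the paper extracts from the by-product of \cite[Thm.~3.2]{idivgeometry75C} that $\KL$ between consecutive iterates tends to zero. Without (ii), applying $\mathrm m$ to the limit only gives convergence of the \emph{normalized} iterates.
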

\begin{proof}
	For $k=1, \dots, N$, Algorithm~\ref{alg:gis_pkl_mix} normalizes the $A^k\in \R^{m_k\times M}, b^k\in \R^{m_k}\geq 0$ so that
	\begin{equation}\label{eq:assumptions_general}
	\sum_{i=1}^{m_k} A^k_{i, j} = \one_{m_k}^\tt A = \one_M, \qquad
	\sum_{i=1}^{m_k} b^k_i = \one_{m_k}^\tt b = 1, \qquad
	k=1, \dots, n.
	\end{equation}
	For simplicity of notation, we will assume that $I_\text{GIS}=\{ n \}$, the other cases work similarly.
	We consider an alternative problem with $\tilde q_{i_1\dots i_n j}\coloneqq A^1_{i_1 j}\dots A^n_{i_n j}q_j$ and
	\begin{equation}
		\tilde\C^k \coloneqq \{ \tilde p\in \R^{m_1\times\dots\times m_n\times M}: \exists p\in \Delta_M, \text{ s.t. } \tilde p_{i_1 \dots i_n j} = A^1_{i_1 j}\dots A^n_{i_n j}p_j \text{ and } A^kp=b^k \},
	\end{equation}
	for $k=1, \dots, n-1$, and
	\begin{align}
		\tilde\C^{n-1/2} &\coloneqq \{\tilde p\in \R^{m_1\times\dots\times m_n\times M}: \sum_{i_1\dots i_{n-1} j}\tilde p_{i_1 \dots i_n j} = b^n_{i_n}, \quad i_n=1, \dots, m_n \}, \\
		\tilde\C^n &\coloneqq \{\tilde p\in \R^{m_1\times\dots\times m_n\times M}: \exists p\in \Delta_M \text{ s.t. } \tilde p_{i_1 \dots i_n j} = A^1_{i_1 j}\dots A^n_{i_n j}p_j \}.
	\end{align}
	Set $\tilde\C=\tilde\C^1\cap \dots \tilde\C^{n-1} \cap \tilde\C^{n-1/2} \cap \tilde\C^{n}$, then the problem we consider is
	\begin{equation}\label{eq:equivalent_problem}
		\min_{\tilde p\in \tilde\C} \KL(\tilde p, \tilde q).
	\end{equation}
	We show that \eqref{eq:equivalent_problem} is equivalent to \eqref{eq:problem}.
	To this end, consider $f\colon \R^M\to \R^{m_1\times\dots\times m_n\times M}$ and $F\colon \R^{m_1\times\dots\times m_n\times M}\to \R^M$ defined by
	\begin{equation}
	f(\tilde p)_j \coloneqq \sum_{i_1 \dots i_n} \tilde p_{i_1 \dots i_n j}, \qquad
	F(p)_{i_1 \dots i_n j} \coloneqq A^1_{i_1 j}\dots A^n_{i_n j}p_j.
	\end{equation}
	Using \eqref{eq:assumptions_general}, it is easy to check that these maps are bijections between $\Delta_M$ and $\tilde\C^n$ as well as between $\C^k$ and $\tilde\C^k$ for each $k=1, \dots, n-1$ with $f=F^{-1}$.
	Furthermore, for every $\tilde p$, $\tilde r$ with $\tilde p_{i_1 \dots i_n j}=A^1_{i_1 j}\dots A^n_{i_n j}p_j$ for some $p\in \R^M_{\geq 0}$ and likewise for $\tilde r$, it holds
	it holds that
	\begin{align}
	\KL(\tilde p, \tilde r)
	&= \sum_{i_1 \dots i_n j} \tilde p_{i_1 \dots i_n j} \log \frac{\tilde p_{i_1 \dots i_n j}}{\tilde r_{i_1 \dots i_n j}}
	- \sum_{i_1 \dots i_n j}\tilde p_{i_1 \dots i_n j}
	+ \sum_{i_1 \dots i_n j} \tilde r_{i_1 \dots i_n j} \\
	&= \sum_{i_1 \dots i_n j} p_jA^1_{i_1 j}\dots A^n_{i_n j} \log \frac{p_j}{r_j}
	- \sum_{i_1 \dots i_n j}p_jA^1_{i_1 j}\dots A^n_{i_n j}
	+ \sum_{i_1 \dots i_n j}r_jA^1_{i_1 j}\dots A^n_{i_n j} \\
	&= \sum_{j=1}^M p_j\log \frac{p_j}{r_j}
	- \sum_{j=1}^Mp_j
	+ \sum_{j=1}^Mr_j
	= \KL(p, r).
	\end{align}
	It follows for all $k=1, \dots, n-1$ that
	\begin{equation}\label{eq:correspondence}
		P^\KL_{\C^k} \circ f = f\circ P^\KL_{\tilde\C^k} \quad
		\Leftrightarrow \quad P^\KL_{\C^k} = f\circ P^\KL_{\tilde\C^k} \circ F.
	\end{equation}
	Next, corresponding to the iterates $p^{(k)}$, $k\in \N$ of Algorithm~\ref{alg:gis_pkl_mix}, set for $L\in \N_0$, $k\geq 1$,
	\begin{align}
	\tilde p^{(Ln+k)} &\coloneqq P^\KL_{\tilde\C^k}(\tilde p^{(Ln+k-1)}) \quad \text{for } k\in \{ 1, \dots, n-1\}, \\
	\tilde p^{(Ln-1/2)} &\coloneqq P^\KL_{\tilde\C^{n-1/2}}(\tilde p^{(Ln-1)}) \quad \text{for } L\geq 1, \\
	\tilde p^{(Ln)} &\coloneqq P^\KL_{\tilde\C^{n}}(\tilde p^{(Ln-1/2)}) \quad \text{for } L\geq 1.
	\end{align}
	Since $f(\tilde p^{(0)}) = f(\tilde q) = q = p^{(0)}$, assuming that $\tilde p^{(k-1)}=F(p^{(k-1)})$ for some $k\in \{ 0, \dots, n-1 \}$, we have by \eqref{eq:correspondence} that
	\begin{align}\label{eq:alg_correspondence}
		p^{(k)}
		= P^\KL_{\C^k}(p^{(k-1)})
		= f(P^\KL_{\tilde \C^k}(F(p^{(k-1)})))
		= f(P^\KL_{\tilde \C^k}(\tilde p^{(k-1)}))
		= f(\tilde p^{(k)})
	\end{align}
	or equivalently $\tilde p^{(k)} = F(p^{(k)})$ for all $k=0, \dots n-1$ by induction.
	We proceed to show that $\tilde p^{(n)}=F(c^{(n)}p^{(n)})$ as well for a normalization constant $c^{(n)}$.
	First, as in \eqref{eq:scaling}, the projection onto $\tilde\C^{n-1/2}$ is given by
	\begin{equation}
		\tilde p^{(n-1/2)}_{i_1\dots i_n j}
		= \tilde p^{(n-1)}_{i_1\dots i_n j}\cdot \frac{b^n_{i_n}}{\sum_{k_1\dots k_{n-1} l}\tilde p^{(n-1)}_{k_1\dots k_{n-1} i_n l}}.
	\end{equation}
	Since
	it holds that $\tilde p^{(n-1)}_{i_1\dots i_n j}=A^1_{i_1 j}\dots A^n_{i_n j}p^{(n-1)}_j$, we have
	\begin{equation}
	\tilde p^{(n-1/2)}_{i_1\dots i_n j}
	= A^1_{i_1 j}\dots A^n_{i_n j}p^{(n-1)}_j \frac{b^n_{i_n}}{\sum_{k_1\dots k_{n-1} l}A^1_{k_1 l}\dots A^n_{k_n l}A^n_{i_n l}p^{(n-1)}_l}
	= A^1_{i_1 j}\dots A^n_{i_n j}p^{(n-1)}_j \frac{b^n_{i_n}}{(A^n p^{(n-1)})_{i_n}
	}.
	\end{equation}
	Note that $\tilde\C^{n-1/2}\subset \Delta_{m_1\dots m_n M}$, since for $\tilde p^{(n-1/2)}\in \tilde\C^{n-1/2}$, we have
	\begin{equation}
		\sum_{i_1\dots i_n j}A^1_{i_1 j}\dots A^n_{i_n j}p^{(n-1)}_j \frac{b^n_{i_n}}{(A^n p^{(n-1)})_{i_n}}
		= \sum_{i_n=1}^{m_n} b^n_{i_n} \frac{\sum_{j=1}^M A^n_{i_n j} p^{(n-1)}_j}{\sum_{l=1}^M A^n_{i_n l} p^{(n-1)}_l}
		= \sum_{i_n=1}^{m_n} b^n_{i_n}
		= 1.
	\end{equation}
	Next, consider the projection onto $\tilde\C^n$.
	Recall that
	\begin{equation}
	p^{(n)}
	= p^{(n-1)}\odot \exp \Big( (A^n)^\tt \log \frac{b}{A^n p^{(n-1)}} \Big)
	\end{equation}
	and set $c^{(n)}\coloneqq (\sum_{j=1}^M p^{(n)}_j)^{-1}$.
	If $\tilde p\in \tilde\C^n$ with $\tilde p_{i_1\dots i_n j}=A^1_{i_1 j}\dots A^n_{i_n j}p_j$ for some $p\in \Delta_M$, then
	\begin{align}
		\KL(\tilde p, \tilde p^{(n-1/2)})
		&= \sum_{i_1\dots i_n j} \tilde p_{i_1\dots i_n j}\log\frac{\tilde p_{i_1\dots i_n j}}{\tilde p^{(n-1/2)}_{i_1\dots i_n j}}
		- \underbrace{\sum_{i_1\dots i_n j} \tilde p_{i_1\dots i_n j}}_1
		+ \underbrace{\sum_{i_1\dots i_n j} \tilde p^{(n-1/2)}_{i_1\dots i_n j}}_1 \\
		&= \sum_{i_1\dots i_n j} p_j A^1_{i_1 j}\dots A^n_{i_n j}\log \frac{p_j}{p^{(n-1)}_j \frac{b^n_{i_n}}{(A^n p^{(n-1)})_{i_n}}} \\
		&= \sum_{j=1}^M p_j\log p_j - \sum_{j=1}^M p_j\Big(\log p^{(n-1)}_j + \sum_{i_n=1}^{m_n} A^n_{i_n j} \log\Big( \frac{b}{A^n p^{(n-1)}} \Big)_{i_n} \Big) \\
		&= \KL(p, p^{(n)})
		= \KL(p, c^{(n)}p^{(n)}) + \log c^{(n)}.
	\end{align}
	This is minimal for $p=c^{(n)}p^{(n)}$, such that $\tilde p^{(n)}=F(c^{(n)}p^{(n)})$.
	It is easy to check that if $p\in \Delta_M$, then $\KL(p, cq)=\KL(p, q)+C$, where $C$ does not depend on $p$.
	Since $\C^{n+1}=\C^1\subset \Delta_M$, this implies
	\begin{equation}
		p^{(n+1)}
		= P^\KL_{\C^1}(p^{(n)})
		= P^\KL_{\C^1}(c^{(n)}p^{(n)}).
	\end{equation}
	Together with \eqref{eq:alg_correspondence}, it holds that
	\begin{align}
		&(\tilde p^{(0)}, \tilde p^{(1)}, \dots, \tilde p^{(n-1)}, \tilde p^{(n)}, \tilde p^{(n+1)}, \dots) \\
		={}& (F(p^{(0)}), F(p^{(1)}), \dots, F(p^{(n-1)}), F(c^{(n)}p^{(n)}), F(p^{(n+1)}), \dots).
	\end{align}
	By \cite[Thm.~3.2]{idivgeometry75C}, we have $\tilde p^{(k)}\to P^\KL(\tilde q)$ for $k\to \infty$.
	Furthermore, it follows from the proof of this reference that $\KL(\tilde p^{(n)}, \tilde p^{(n-1/2)})\to 0$.
	Plugging this into the calculation above, it follows $c^{(n)}\to 1$.
	Hence, we also have $ p^{(k)}\to P^\KL(q)$, which completes the proof.
\end{proof}

\section{Applications in Optimal Transport}\label{sec:applications}

In this section, we consider several applications of the approach presented in Section~\ref{sec:iproj_affine} in the field of optimal transport.
In what follows, we will always consider the discrete case where $\supp(\mu)$, $\supp(\nu) \subset \{ x_1, \dots, x_M \}$.
Note that we choose the common symbol $M$ for notational convenience.
Extending this to different support sets or sizes of $\mu$ and $\nu$ is straightforward.

\subsection{OT with Moment Constraints}\label{sec:min_problem_moments}

In this section, given a measure $\mu \in \mathcal P(\R^d)$, a set of $m$ ``test functions'' $A\in \R^{m\times M}$ and $b\in \R^m$, we consider problems of the form
\begin{equation}\label{eq:min_problem_moments}
\min_{\nu\in \C(A, b)} \OT_\eps(\mu, \nu).
\end{equation}
Of particular interest are constraints on the expectation value and variance, or the Fourier coefficients of $\nu$.
Problem \eqref{eq:min_problem_moments} can be rewritten as
\begin{align}\label{eq:min_problem_moments_plan}
\min_{\nu\in \C(A, b)} \OT_\eps(\mu, \nu)
= \min_{\nu\in \C(A, b)} \min_{\pi\in \Pi(\mu, \nu)} \sum_{i,j=1}^M c_{ij}\pi_{ij} - \eps E(\pi)
= \min_{\substack{\pi \one = \mu \\ A \pi^\tt \one = b}} \sum_{i,j=1}^M c_{ij}\pi_{ij} - \eps E(\pi).
\end{align}
In turn, denoting by $K\coloneqq \exp(-c/\eps)$ the so-called Gibbs kernel, this problem is equivalent to
\begin{equation}\label{eq:schroedinger_moments}
P^\KL_\C(K) \coloneqq \argmin_{\pi\in \C} \KL(\pi, K),
\end{equation}
where in our case, we have $\C = \C^1 \cap \C^2$ with
\begin{equation}
\C^1\coloneqq \{ \pi \in \R^{M\times M}: (P^1)_\# \pi = \mu \}, \qquad
\C^2\coloneqq \{ \pi \in \R^{M\times M}: (P^2)_\# \pi \in \C(A, b) \}.
\end{equation}
In what follows, we assume that $\mu>0$ in order to obtain positive values inside the logarithm in Algorithm~\ref{alg:gis_pkl_mix}.
This is without loss of generality, since for any $i$ with $\mu_i=0$, the solution of \eqref{eq:min_problem_moments} will have $\pi_{i,:}=0$.

The problem on the right hand side of \eqref{eq:schroedinger_moments} fits into the framework presented in Section~\ref{sec:iproj_affine} and Algorithm~\ref{alg:gis_pkl_mix} applies with the iterations
\begin{equation}\label{eq:PKL1_primal}
\pi^{(k)} \coloneqq P^\KL_{\C^1}(\pi) = \diag(\mu/ \pi\one)\pi^{(k-1)}
\end{equation}
for odd $k$, see \eqref{eq:scaling},
and
\begin{equation}\label{eq:PKL2_primal}
\pi^{(k)} \coloneqq \pi^{(k-1)}\diag(\exp(A^\tt \log(b / A (\pi^{(k-1)})^\tt \one))).
\end{equation}
for even $k$.
However, similar as with the Sinkhorn algorithm, we can derive the usual dual form for memory efficiency.

\subsubsection{Derivation of Dual Algorithm}\label{sec:dual_alg}

In this section, we show that as for the standard Sinkhorn algorithm, it is possible to recover the primal from the dual solution.
This has major benefits with respect to memory, since the number of dual variables to store is only $2M$ compared to $M^2$ primal variables, i.e., entries in the transport plan $\pi$.
Introducing Lagrangian multipliers to \eqref{eq:min_problem_moments_plan}, we get
\begin{equation}
	L(\pi, \alpha, \beta)
	= \sum_{i,j=1}^M (c_{ij}\pi_{ij} + \eps \pi_{ij}\log(\pi_{ij}) - \eps \pi_{ij}) - \alpha^\tt(\pi \one  - \mu) - \beta^\tt(A\pi^\tt\one - b).
\end{equation}
Thus, we get as optimality conditions for all $i, j=1, \dots, M$ that
\begin{equation}
	0 = c_{ij} + \eps \log(\pi_{ij}) - \alpha_i - \beta^\tt A_{:, j}.
\end{equation}
We rearrange and summarize this as
\begin{equation}\label{eq:primal_dual_relation}
	\pi
	= \diag(\exp(\alpha/\eps))\exp(-c/\eps) \diag(\exp(A^\tt \beta / \eps))
	= \diag(u) K \diag(v),
\end{equation}
where we have substituted the scaling variables $u=\exp(\alpha/\eps)$, $v=\exp(A^\tt \beta/\eps)$.
Plugging in the constraint $\pi\one=\mu$ into \eqref{eq:primal_dual_relation} yields
\begin{align}
	\mu
	= \pi\one
	= \diag(u)K\diag(v)\one
	= \diag(u)Kv
	= u\odot Kv,
\end{align}
which rearranges to the well-known Sinkhorn iteration
\begin{equation}\label{eq:PKL1_dual}
	u^{(k)} \coloneqq \mu/Kv^{(k-1)}, \qquad v^{(k)} \coloneqq v^{(k-1)}.
\end{equation}
In fact, this corresponds to the projection \eqref{eq:PKL1_primal}.
On the other hand, plugging in \eqref{eq:PKL2_primal} into \eqref{eq:primal_dual_relation} yields
\begin{align}
\diag(u^{(k)})K\diag(v^{(k)})
&\coloneqq \diag(u^{(k-1)})K\diag(v^{(k-1)})\diag(\exp(A^\tt \log(b / A (\pi^{(k-1)})^\tt \one))) \\
&= \diag(u^{(k-1)})K\diag(v^{(k-1)} \odot \exp(A^\tt \log(b / A (\pi^{(k-1)})^\tt \one))),
\end{align}
such that we only update the scaling variable $v$ as
\begin{align}\label{eq:PKL2_dual}
v^{(k)}
&= v^{(k-1)} \odot \exp(A^\tt \log(b / A (\diag(u^{(k-1)})K\diag(v^{(k-1)}))^\tt \one)) \nonumber \\
&= v^{(k-1)} \odot \exp(A^\tt \log(b / A (v^{(k-1)}\odot K^\tt u^{(k-1)}))).
\end{align}
Note that it is never necessary to store the transport plan $\pi\in \R^{M\times M}$, since
\begin{equation}
	\nu=\diag(v)K^\tt \diag(u)\one = v\odot K^\tt u.
\end{equation}
In the case when $c_{ij}= \Vert x_i-x_j\Vert^2$, even the multiplication with $K$, which is a Gaussian convolution, can be carried out without allocating $O(M^2)$ memory for $K\in \R^{M\times M}$ using fast Fourier transforms, see, e.g., \cite{FFTboost22LPP}.
While this is also possible for non-equidistant grids using the non-equispaced Fourier transform (NFFT), this simplifies for equidistant grids, where a convolution can be performed using the conventional fast Fourier transform (FFT) with $O(M\log M)$ arithmetical operations.
This exploits that $K$ is a Toeplitz matrix.
Since we stay in this simpler setting using the squared Euclidean distance $c(x_i, x_j)=\Vert x_i-x_j\Vert^2$, we briefly outline the necessary computations:
Denoting by $\FFT_M$, $\IFFT_M$ the FFT, respectively inverse fast Fourier transform (IFFT) of length $M$, for
\begin{equation}
	s \coloneqq (0, \Vert x_1-x_2\Vert^2, \Vert x_1-x_3\Vert^2, \dots, \Vert x_1-x_M\Vert^2, 0, \Vert x_1-x_M\Vert^2, \dots, \Vert x_1-x_2\Vert^2)^\tt,
\end{equation}
and $L\coloneqq \FFT_{2M} (\exp(-s/\eps))$, it holds
\begin{equation}
	Ka = \IFFT_{2M}(L\odot \FFT_{2M}((a, \zero_M)^\tt))
\end{equation}
For the cyclical convolution on the torus, for $t\coloneqq (0, \Vert x_1-x_2\Vert^2, \Vert x_1-x_3\Vert^2, \dots, \Vert x_1-x_M\Vert^2)^\tt$ and $L' \coloneqq \FFT_M(\exp(-t/\eps))$, this simplifies further to
\begin{equation}
Ka = \IFFT_M(L'\odot \FFT_M(a)),
\end{equation}
since it is no longer required to embed $K$ into a circulant matrix.

We summarize the derivation above in Algorithm~\ref{alg:sinkhorn}.
Convergence is clear by Theorem~\ref{thm:gis_pkl_mix_convergence}, since Algorithm~\ref{alg:sinkhorn} is just a special case of Algorithm~\ref{alg:gis_pkl_mix} written in dual form.

\begin{algorithm}[htb]
	\begin{algorithmic}
		\State \textbf{Input:} $\mu\in \R^M$, $c\in \R^{M\times M}$, $\eps >0$, $A\in \R^{m\times M}$, $b\in \R^m$
		\State Normalize $A$, $b$ 
		as described by the steps outlined in Section~\ref{sec:gis}
		\State $K\gets \exp(-c/\eps)$
		\State $u, v \gets \one_M$
		\While{not converged}
			\State $u\gets \mu/Kv$
			\State $v\gets v \odot \exp(A^\tt \log(b / A (v\odot K^\tt u)))$
		\EndWhile
		\State \textbf{Output:} $\nu \coloneqq v\odot K^\tt u\in \R^M$
		\caption{Sinkhorn/GIS algorithm for $\OT_\eps$-minimization with moment constraints}
		\label{alg:sinkhorn}
	\end{algorithmic}	
\end{algorithm}

We briefly comment on an alternative approach to \eqref{eq:PKL2_dual}.
Plugging $A\pi^\tt\one = b$ into \eqref{eq:primal_dual_relation} yields the constraints
\begin{align}
	b
	= A\pi^\tt \one
	= A\diag(v)K^\tt \diag(u)\one
	= A(v\odot K^\tt u)
	= A(\exp(A^\tt \beta / \eps) \odot K^\tt u).
\end{align}
This nonlinear system of equations in $\beta$ can be solved using a Newton-scheme.
Similar as above, we find numerically that alternatingly performing \eqref{eq:PKL1_dual} and one Newton-iteration yields a convergent algorithm.
We do not discuss this approach further, since each Newton-iteration requires the solution of a linear system and convergence is not clear, without apparent benefits of this approach over the other.

\subsubsection{Numerical Examples}

We present two proof-of-concept examples: Constraining mean and variance of a measure supported on the unit interval, and constraining the mean of a measure supported on the torus.

For the first example, we divide the unit interval into the uniform grid
\begin{equation}
0 = x_1 < x_2 < \dots < x_M = 1, \qquad x_i = \frac{i-1}{M-1}, \quad i=1, \dots, M
\end{equation}
with $M=100$.
Let $x=(x_1, \dots, x_M)$.
Furthermore, $\mu= \sum_{i=1}^M \mu_i \delta_{x_i}$, where we sample $\mu_i$ from the probability density function (PDF) of the normal distribution $\mathcal N(0.4, 0.1^2)$ and normalize $\mu$ to sum to one.
We take $c(x_i, x_j) = \vert x_i-x_j\vert^2$ as the cost function and $\eps=0.01$.
Then we solve \eqref{eq:min_problem_moments} using Algorithm~\ref{alg:sinkhorn}, where for $X\sim \mu$, $Y\sim \nu$, we pose the constraints that
\begin{equation}\label{eq:constr_unit_interval}
\E[Y] 
= \E[X] + 0.1
= 0.5 \quad \text{and} \quad 
\Var[Y] 
= 1.5^2\Var[X] 
= 0.15^2.
\end{equation}
The expectation constraint can be expressed as $x^\tt \nu = \E[Y] = 0.5$.
Moreover, since
\begin{equation}
0.15^2
= \Var[Y]
= \E[Y^2] - \E[Y]^2
= \E[Y^2] - 0.5^2,
\end{equation}
the variance constraint can be expressed as $(x\odot x)^\tt \nu = 0.5^2 + 0.15^2$.
Thus, we solve \eqref{eq:min_problem_moments} with
\begin{equation}
A = \begin{bmatrix}
x_1 & \dots & x_M \\
x_1^2 & \dots & x_M^2
\end{bmatrix},
\qquad
b = \begin{bmatrix}
0.5 \\
0.5^2+0.15^2
\end{bmatrix}
\end{equation}
using Algorithm~\ref{alg:sinkhorn}.
For comparison, we use an alternative approach without entropic regularization.
This can be done by solving the following linear program (LP):
\begin{equation}
\min_{\pi\in \R^{M\times M}} \vec(c)^\tt \vec(\pi)
\quad
\text{such that}
\quad
\begin{bmatrix}
\one e_1^\tt & \one e_2^\tt & \dots & \one e_M^\tt \\
A & A & \dots & A
\end{bmatrix}
\vec(\pi)
=
\begin{bmatrix}
\mu \\
b
\end{bmatrix}.
\end{equation}
The results are displayed in Figure~\ref{fig:gaussian01}.
While the linear program solution also fulfills the constraints, we observe severe undesirable grid noise artifacts, while this solution was more expensive to compute.
On the other hand, the result of the Sinkhorn-algorithm is, as one might expect, another (cut-off) Gaussian with the given mean and variance.
It is hardly visible in Figure~\ref{fig:gaussian01}, since it matches the PDF of $\mathcal N(0.5, 0.15^2)$ very closely.
\begin{figure}[htb]
	\centering
	\includegraphics[width=.7\textwidth]{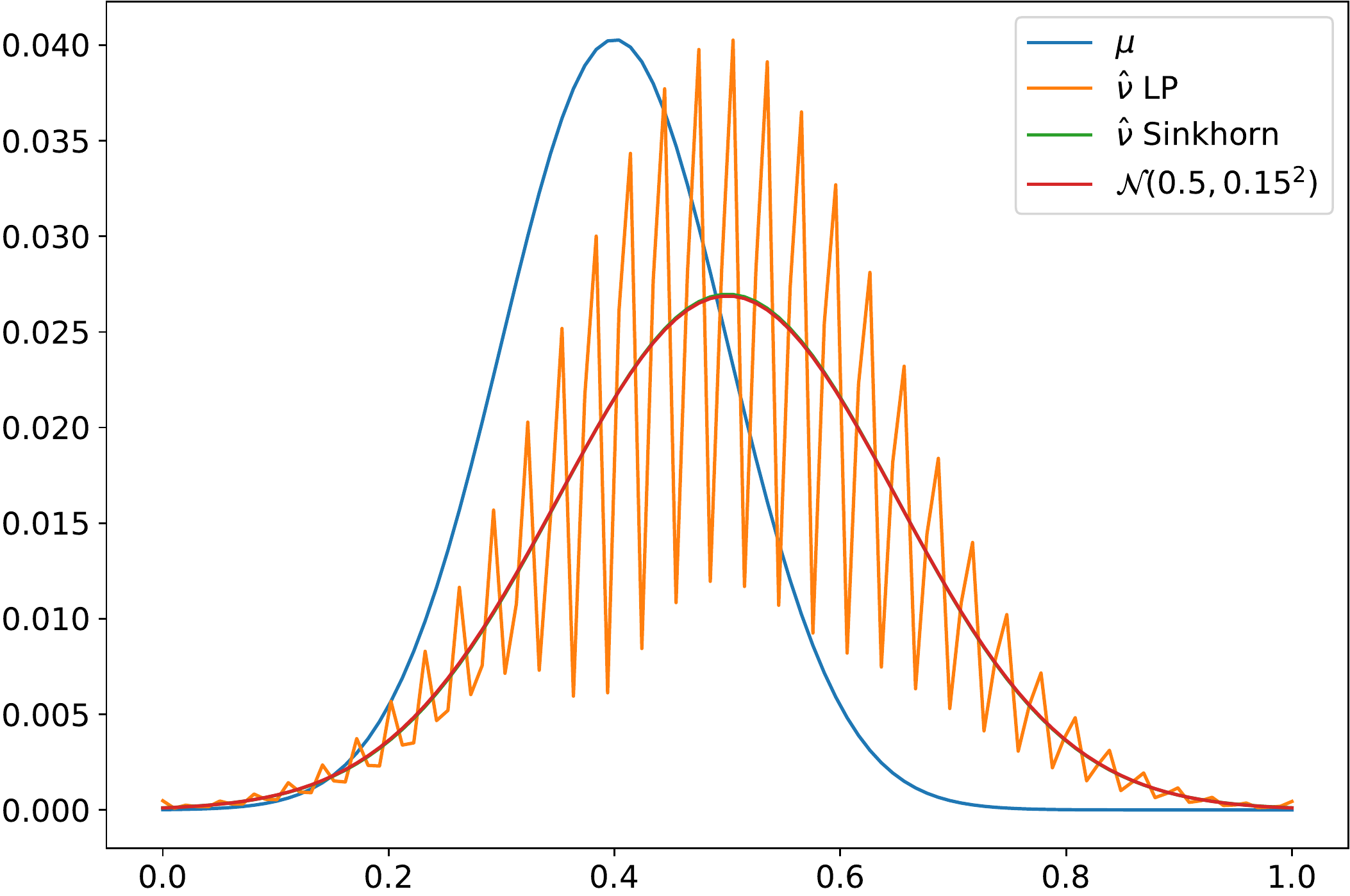}
	\caption{Results for problem \eqref{eq:min_problem_moments} with constraints \eqref{eq:constr_unit_interval}.
		Depicted are $\mu$, the solutions $\hat\nu$ computing using the Sinkhorn-like Algorithm~\ref{alg:sinkhorn} and using a linear program, respectively, and, for comparison, $\mathcal N(0.5, 0.15^2)$.}
	\label{fig:gaussian01}
\end{figure}

Next, we consider optimal transport on the torus and constrain the Fourier coefficients of the solution.
The von Mises distribution $\mathcal M(\gamma, \kappa)$ with mean $\gamma$ and concentration parameter $\kappa$ is an analog of the Gaussian distribution on the torus and is characterized by the PDF
\begin{equation}
f(x; \gamma, \kappa) = \frac 1 C \exp(\kappa \cos(x-\gamma)),
\end{equation}
where $C= \int_{-\pi}^\pi \exp(\kappa \cos(x-\gamma))$ is the normalizing constant.
We divide the torus into the uniform grid
\begin{equation}
-\pi = x_1 < x_2 < \dots < x_M = \pi - \frac{2\pi}{M}, \qquad
x_i = -\pi + 2\pi\frac{i-1}{M}, \quad i=1, \dots, M
\end{equation}
with $M=500$ and let again $x=(x_1, \dots, x_M)$.
As above, we construct $\mu$ by sampling this distribution with parameters $\gamma=-1$ and $\kappa=1/(0.2\pi)^2$ at the grid positions.
We denote by
\begin{equation}
z\coloneqq \E_\mu[\e^{\imag x}] = \sum_{i=1}^M \mu_i \e^{\imag x_i} \in \mathbb{C}
\end{equation}
the circular mean or circular first moment of $\mu$.
Note that this is the first non-trivial Fourier coefficient of $\mu$.
For this example, we want to constrain this quantity of $\nu$ to be
\begin{equation}\label{eq:constr_torus}
\E_\nu[\e^{\imag x}] = \vert z \vert \e^{\imag (\arg(z) + \pi/2)} \eqqcolon z',
\end{equation}
that is, we want the circular mean of $\nu$ to be rotated by a quarter of the unit circle compared to $\mu$.
For solving this problem using Algorithm~\ref{alg:sinkhorn}, we convert this constraint to real-valued constraints as
\begin{align}
\sum_{i=1}^M \cos(x_i)\mu_i = \Real(z'), \qquad
\sum_{i=1}^M \sin(x_i)\mu_i = \Imag(z'),
\end{align}
such that
\begin{equation}
A = \begin{bmatrix}
\cos(x_1) & \cos(x_2) & \dots & \cos(x_M) \\
\sin(x_1) & \sin(x_2) & \dots & \sin(x_M)	
\end{bmatrix},
\qquad
b = \begin{bmatrix}
\Real(z') \\
\Imag(z')
\end{bmatrix}.
\end{equation}
The results are depicted in Figure~\ref{fig:vonmises} on the left, which also contains a von Mises distribution rotated by $\pi/2$ for comparison.
Interestingly, the solution $\hat\nu$ is something different: Because of the periodicity of the torus, a distribution with two modes has a lower cost.
An inspection of the corresponding transport plan displayed on the right hand side of Figure~\ref{fig:vonmises} reveals that the smaller bump of $\hat\nu$ indeed ``wraps around'' and approaches $\mu$ from the other side.
\begin{figure}[htb]
	\centering
	\includegraphics[width=\textwidth]{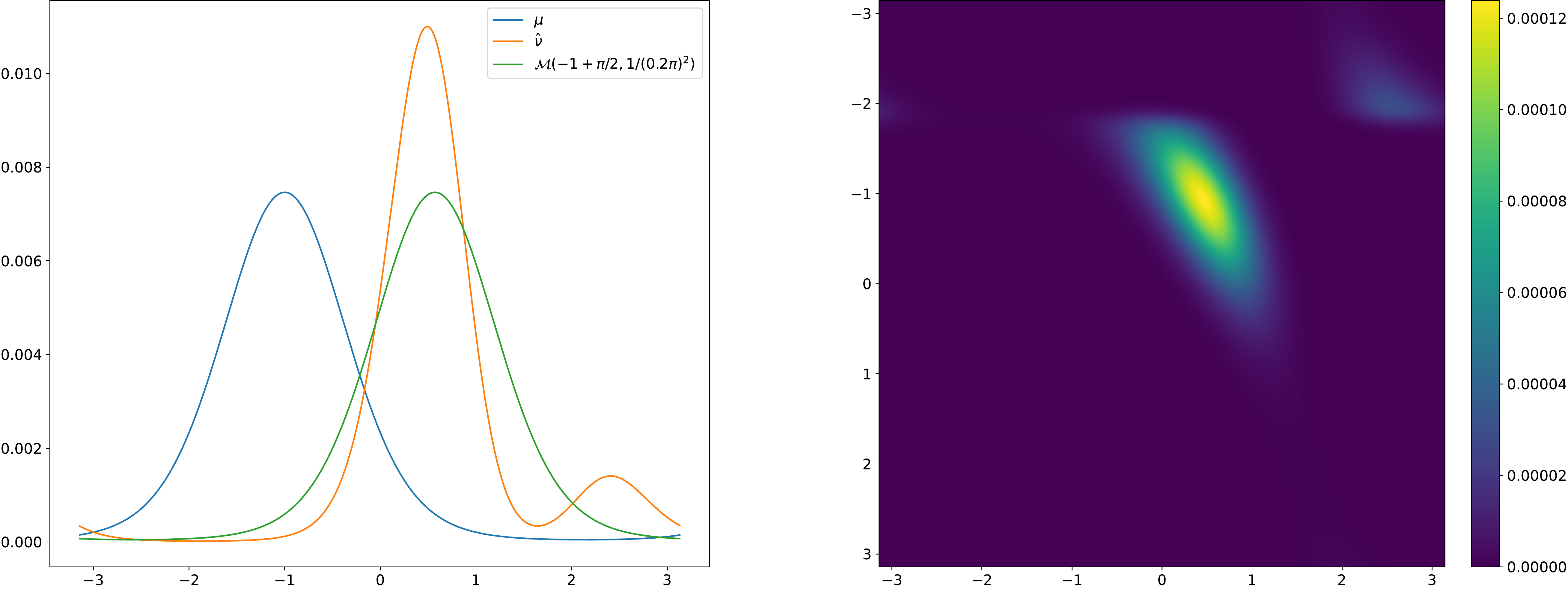}
	\caption{Results for problem \eqref{eq:min_problem_moments} with constraints \eqref{eq:constr_torus}.
	Left: $\mu$, the solution $\hat\nu$  and, for comparison, $\mathcal M(-1+\pi/2, 1/(0.2\pi)^2)$.
	Right: Corresponding transport plan $\pi\in \Pi(\mu, \hat\nu)$.}
	\label{fig:vonmises}
\end{figure}

\subsection{Martingale Optimal Transport}\label{sec:martOT}

Martingale OT is a variant of the standard Monge--Kantorovich formulation of optimal transport \eqref{eq:monge_kantorovich} with the additional constraint that if $(X, Y)\sim \pi\in \Pi(\mu, \nu)$, then we require for the conditional expectation that $\E[Y\vert X] = X$.
This problem comes from mathematical finance \cite{martOT13BHP,martOT14GHT}.
For an introduction to and thorough analysis of martingale optimal transport, we refer to \cite{martOT16BJ}.
In the discrete setting with $\supp(\mu), \supp(\nu)\subset \{ x_i\in \R: i=1, \dots, M \}$, $x_1<\dots<x_M$, the problem thus reads
\begin{equation}\label{eq:martOT}
\min_{\pi\in \Pi(\mu, \nu)} \sum_{i,j=1}^M \pi_{ij}c_{ij} \quad \text{such that} \quad
x_i
= \frac{\sum_{j=1}^M \pi_{ij}x_j}{\sum_{j=1}^M \pi_{ij}}
= \frac 1 {\mu_i} \sum_{j=1}^M \pi_{ij}x_j,
\quad
i=1, \dots, M,
\end{equation}
where we again assume without loss of generality that $\mu>0$, or otherwise, the rows in $\pi$ corresponding to zero-entries in $\pi$ are zero as well.
Problem \eqref{eq:martOT} has a solution, if and only if $\mu$ and $\nu$ are in convex order \cite{martOTexistence65S}, denoted by $\mu \preceq \nu$, which means that
\begin{equation}
	\int \varphi \dd \mu \leq \int \varphi \dd \nu
\end{equation}
for all convex functions $\varphi\colon\R\to \R$, as well as both measures having finite mass and finite first moments.
Intuitively, it means that $\nu$ is ``more spread out'' than $\mu$.
This is perhaps more easily understood through the following characerization in dimension $d=1$.
Let
\begin{equation}
	F^{-1}_\mu(y) \coloneqq \inf\{ x\in \R: \mu((-\infty, x]) \geq y \}, \quad y\in (0, 1)
\end{equation}
denote the quantile function of a probability measure $\mu$.
Then $\mu\preceq\nu$, if and only if
\begin{equation}
	\int_0^y (F^{-1}_\mu(z) - F^{-1}_\nu(z)) \dd z \geq 0 \quad \text{for all }  y\in [0,1],
\end{equation}
with equality for $y=1$.
For more details, we refer to the recent paper \cite{convexorderchar22WZ} for an overview and a list of references on this notion.
The same authors show that, interestingly, convex order can also be characterized using the Wasserstein distance $\W_2^2$: It holds $\mu\preceq\nu$ precisely when
\begin{equation}
	\W_2^2(\nu, \rho) - \W_2^2(\mu, \rho) \leq \int\vert x\vert^2\dd\nu(x) - \int\vert x\vert^2\dd\mu(x)
\end{equation}
for all probability measures $\rho\in \mathcal P(\R^d)$ with bounded support.

\subsubsection{Algorithm Derivation}\label{sec:martOT_alg_derivation}

We derive a numerical method for \eqref{eq:martOT}.
For previous work in this direction, refer to \cite{entropicMartOT18dM,martOTmethods19GO}.
We approach this problem using entropic regularization and Algorithm~\ref{alg:gis_pkl_mix}.
This works similarly as for Monge--Kantorovich OT \eqref{eq:monge_kantorovich} and the Sinkhorn algorithm, except for the additional martingale constraint, for which we need to derive the corresponding GIS iteration.
For simplicity, we assume the one-dimensional case $d=1$, but this method is straightforward to generalize to arbitrary dimensions by treating all components independently.
Set $x=(x_1, \dots, x_M)\in \R^M$ as a row vector and for $k=1, \dots, M$,
\begin{equation}
	\C^k\coloneqq \{ \pi\in \R^{M\times M}: \frac{1}{\mu_k} \pi_{k, :}x^\tt=x_k \}.
\end{equation}
Similar as in Section~\ref{sec:min_problem_moments}, we get problem \eqref{eq:schroedinger_moments} for $\C\coloneqq \Pi(\mu,\nu)\cap \C^1\cap \dots\cap \C^M$.
Similarly as with the scaling \eqref{eq:scaling}, it is easy to check that $P^\KL_{\C^k}$ only updates the $k$-th row.
For this update, set $A^k\coloneqq x$, $b^k=x_k$.
Furthermore, set $\bar\pi\coloneqq \diag(\mu^{-1})\pi$ to be the row-normalized version of $\pi$, then the martingale OT constraint in \eqref{eq:martOT} reads as
\begin{equation}\label{eq:mart_constr_1}
	A^k \bar\pi^\tt_{k, :} = b^k \quad \text{for all } k=1, \dots, M.
\end{equation}
Let $\xi\coloneqq \min_j x_j$, $\xi'\coloneqq \max_j x_j-\xi$ and set
\begin{equation}\label{eq:x_normalized}
	A\coloneqq \begin{bmatrix}
	(x-\xi)/\xi' \\
	1-(x-\xi)/\xi'
	\end{bmatrix}
\end{equation}
for the normalization described in Section~\ref{sec:gis}.
Since $\min_j x_j\leq b^k\leq \max x_j$ and $A^1=\dots=A^M=x$, \eqref{eq:mart_constr_1} is equivalent to
\begin{equation}\label{eq:mart_constr_2}
	A\bar\pi_{k, :}^\tt = A_{:, k} \quad \text{for all } k=1, \dots, M,
\end{equation}
with corresponding GIS iteration
\begin{align}
	\bar\pi_{k, :} &\gets
	\bar\pi_{k, :}\odot \exp\Big( A^\tt \log \frac{A_{:, k}}{A\bar\pi^\tt_{k, :}}  \Big)^\tt
	= \frac 1 {\mu_k} \pi_{k, :}\odot \exp\Big( A^\tt \Big( \log \frac{A_{:, k}}{A\pi^\tt_{k, :}} + \log(\mu_k)\one_2 \Big) \Big)^\tt \\
	&= \frac 1 {\mu_k} \pi_{k, :}\odot \exp\Big( A^\tt \log \frac{A_{:, k}}{A\pi^\tt_{k, :}} \Big)^\tt \odot \exp(\log(\mu_k)\one_M)^\tt
	= \pi_{k, :}\odot \exp\Big( A^\tt \log \frac{A_{:, k}}{A\pi^\tt_{k, :}} \Big)^\tt.
\end{align}
Writing \eqref{eq:mart_constr_2} more compactly as
\begin{equation}\label{eq:mart_constr_3}
	A\bar\pi^\tt = A,
\end{equation}
we can do the projection to all $\C^k$ simultaneously by the update
\begin{equation}\label{eq:simultaneous_update}
	\pi\gets \diag(\mu)\pi\odot \exp\Big(A^\tt \log\frac{A}{A\pi^\tt}\Big)^\tt.
\end{equation}
If we choose the order so that the projection step
\begin{equation}\label{eq:mu_proj}
	\pi\gets \diag\Big(\frac{\mu}{\pi\one}\Big)\pi
\end{equation}
comes after \eqref{eq:simultaneous_update}, we can leave out the multiplication with $\diag(\mu)$ in \eqref{eq:simultaneous_update}, as this will cancel out in \eqref{eq:mu_proj}.
This results in Algorithm~\ref{alg:martOT}.
A dual algorithm can also be derived, but this has no apparent benefits, as the solution will not have a ``separable'' solution of the form $\diag(u)K\diag(v)$ because of the martingale constraint.


\begin{algorithm}[htb]
	\begin{algorithmic}
		\State \textbf{Input:} $\mu,\nu\in \R^M$, $c\in \R^{M\times M}$, $\eps >0$
		\State Set $A$ as in \eqref{eq:x_normalized}
		\State $\pi^{(0)}\coloneqq \exp(-c/\eps)$ 
		\While{not converged}
		\vspace{-\bigskipamount}
		\State \begin{align}
			\pi &\gets \pi\diag(\nu/ \pi^\tt \one) \\
			\pi &\gets \pi\odot \exp\Big(A^\tt \log\frac{A}{A\pi^\tt}\Big)^\tt \\
			\pi &\gets \diag(\mu/ \pi \one)\pi
		\end{align}
		\EndWhile
		\State \textbf{Output:} $\pi$
		\caption{Sinkhorn/GIS algorithm for martingale OT}
		\label{alg:martOT}
	\end{algorithmic}	
\end{algorithm}

\subsubsection{Numerical Example}\label{sec:martOT_example}

Next, we test Algorithm~\ref{alg:martOT} on a toy example.
To this end, we first construct an example with $\mu\preceq\nu$.
For a given $\mu\in \mathcal P(\R)$, $0\leq s,t$, define
\begin{equation}
	\nu \coloneqq \frac 1 2 \Big((T_{-s})_\# \mu + (T_t)_\# \mu\Big),
\end{equation}
where $T_a=(x\mapsto x+a)$.
Then we have that $\mu\preceq\nu$, since for any convex function $\varphi\colon \R\to \R$, it holds
\begin{equation}
	\int_\R \varphi(x)\dd\mu(x)
	\leq \int_\R \frac 1 2 (\varphi(x+s)+\varphi(x-t))\dd\mu(x)
	= \frac 1 2 \Big( \int_\R \varphi(x) \dd((T_{-s})_\# + (T_t)_\#) \mu \Big)
	= \int_\R \varphi\dd\nu.
\end{equation}
Note that this example is easily generalized to $n$ shifted distributions with weights $0<\lambda\in \Delta_n$, or even to infinitely many distributions, using Jensen's inequality.
For this example, we choose $M=100$ with a uniform grid $-1=x_1<\dots<x_M=1$, $c(x, y)=\exp(y-x)$, and we sample $\mu$ from $\mathcal N(0, 0.2^2)$ in the grid points $x_i$.
We construct $\nu$ as described above and normalize both measures to sum to one.
Note that we chose $c$ according to the conditions of \cite[Thm.~1.9]{martOT16BJ}, such that the optimal solution without entropic regularization will be concentrated on two graphs.
For Algorithm~\ref{alg:martOT}, we choose $\eps=0.002$ and terminate once the maximum absolute difference in any of the marginal constraints or in $\pi x = \mu\odot x$ is less than $10^{-5}$.
The unregularized solution is computed using the \texttt{linprog} function from Python's scipy package.
The problem and the results are shown in Figure~\ref{fig:martOT}.

We observe that the optimal solution is indeed the so-called ``curtain coupling'' concentrated on two graphs.
While this is qualitatively also observed in the regularized version, it is more blurred out as a result of the regularization.
On the other hand, this solution was obtained in only $4\%$ of the computation time.
\begin{figure}[htb]
	\centering
	\includegraphics[width=\textwidth]{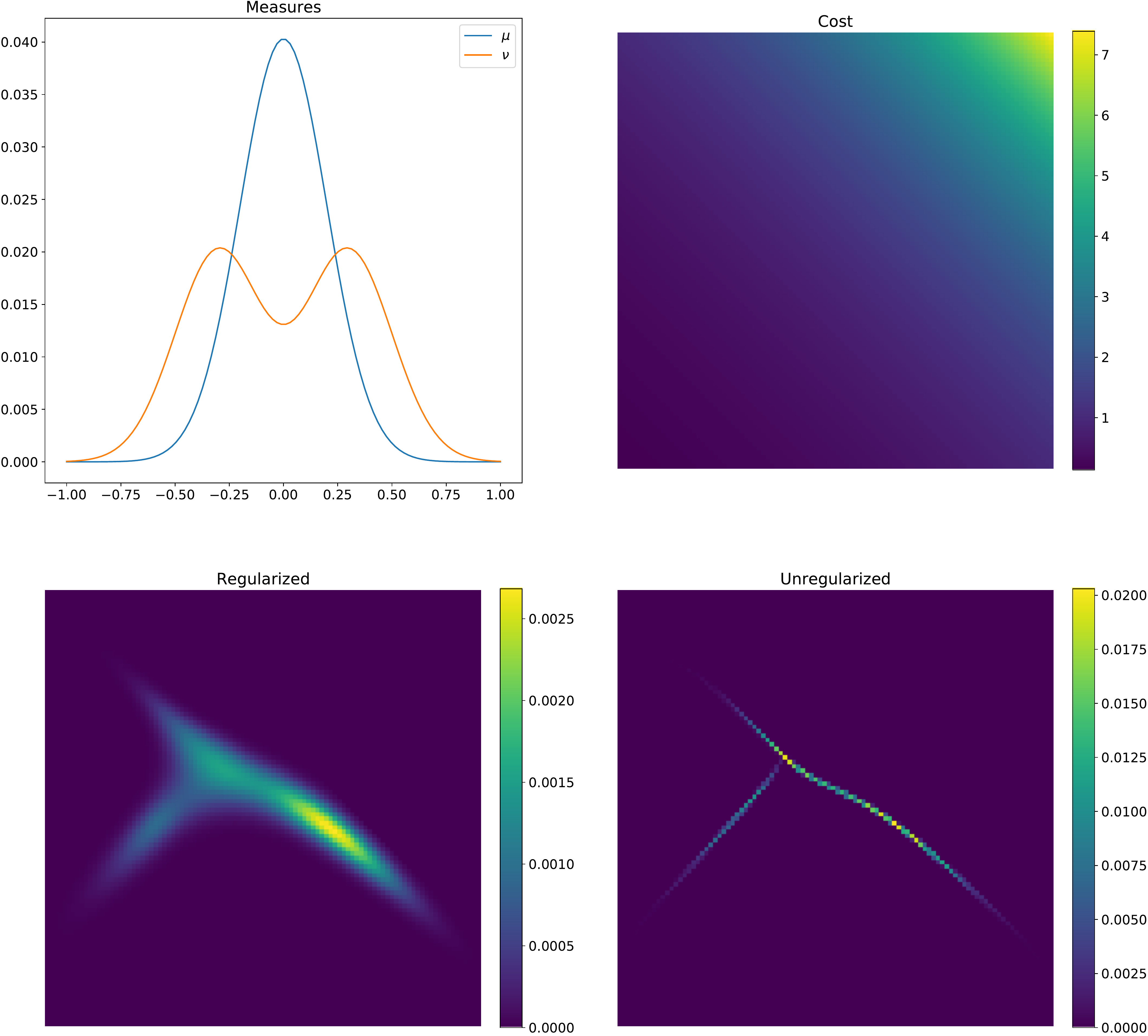}
	\caption{
		Top left: Measures $\mu$, $\nu$ in problem~\eqref{eq:martOT}.
		Top right: Cost function $c(x, y)=\exp(y-x)$.
		Bottom left: Approximation with entropic regularization computed by Algorithm~\ref{alg:martOT}.
		Bottom right: Unregularized solution computed using linear programming.}
	\label{fig:martOT}
\end{figure}

\subsection{Barycentric Weak Optimal Transport}

Next, we state the weak OT problem.
This notion of OT has originally been introduced in \cite{weakOTsolver17GRST} in search of conditions for concentration of measure results for discrete measures.
Nonetheless, it appears in many other topics such as martingale OT (see Section~\ref{sec:martOT})
or the causal OT problem \cite{causalOT17BBLZ} and its applications in mathematical finance \cite{wOTfinance20BBB},
since their constraints can be encoded in the cost function.
It has also been applied to OT barycenters \cite{weakBary21CTF}.

Let a cost function be of the form $C\colon \R^d\times \mathcal P(\R^d)\to \R_{\geq 0}\cup \{ +\infty \}$ and let $\pi_x$ denote the disintegration of $\pi$ with respect to $x\in X$.
Then the weak OT problem is stated as
\begin{equation}\label{eq:weakOT}
	V(\mu, \nu)\coloneqq \inf_{\pi\in \Pi(\mu, \nu)} \int C(x, \pi_x) \dd \mu(x).
\end{equation}
For $C(x, \pi_x) = \int c(x, x')\dd \pi_x(x')$ for some $c\colon \R^d\times \R^d\to \R$, this reduces to the classical Monge--Kantorovich formulation of optimal transport \eqref{eq:monge_kantorovich}.
Another special case is martingale OT from Section~\ref{sec:martOT}, when
\begin{equation}
	C(x, \pi_x) = \begin{cases}
	\int c(x, x')\dd \pi_x(x'), & \int x'\dd \pi_x(x') = x \\
	+\infty, & \text{otherwise}.
	\end{cases}
\end{equation}
Here we consider the special case of barycentric weak OT in the discrete case with $\supp(\mu)$, $\supp(\nu) \subset X = \{ x_1, \dots, x_M\}$.
Then we have
\begin{equation}
	C(x_i, \pi_{x_i}) = c\Big(x_i, \frac{1}{\mu_i} \sum_{j=1}^M \pi_{ij}x_j\Big),
\end{equation}
such that \eqref{eq:weakOT} reduces to
\begin{equation}\label{eq:weakOTbarycentric}
	V(\mu, \nu)
	\coloneqq \min_{\pi\in \Pi(\mu, \nu)} \sum_{i=1}^M \mu_i c\Big( x_i, \frac{1}{\mu_i} \sum_{j=1}^M \pi_{ij}x_j\Big)
	= \min_{\pi\in \Pi(\mu, \nu)} \sum_{i=1}^M \mu_i c(x_i, m_i)
\end{equation}
when substituting
\begin{equation}
m_i
\coloneqq \frac{1}{\mu_i} \sum_{j=1}^M \pi_{ij}x_j
= \sum_{j=1}^M \pi_{x_i} x_j
= \E_{\pi_{x_i}}[x].
\end{equation}
In what follows, we will assume that $c$ is convex in the second argument.
As usual, of particular interest is the cost function $c(x, x') = \Vert x-x'\Vert^2$.

Since $\pi$ appears as an argument of $c$ in \eqref{eq:weakOTbarycentric}, this problem is non-linear.
Thus, the strategy of applying entropic regularization to obtain a problem of the form \eqref{eq:problem} and apply Algorithm~\ref{alg:gis_pkl_mix} only works if we first relax it to a linear problem.
To this end, we rename $\pi$ to $\pi^x$ and introduce an auxiliary plan $\pi^y$ that will in some sense optimize over the $m_i$.
Moreover, we set an appropriate affine constraint to ensure that these $m_i$ fit to the original plan $\pi^x$ that fulfills the marginal constraints with respect to $\mu$ and $\nu$.
Let $Y\coloneqq \{ y_1, \dots, y_N \}$ with $X \subset\conv(Y)$.
As a first step towards the relaxation, consider the equivalent problem
\begin{align}\label{eq:weak_inflated}
	&\min_{\substack{\pi^x\in \R^{M\times M} \\ \pi^y\in \R^{M\times N}}} \sum_{i=1}^M \mu_i c\Big( x_i, \frac{1}{\mu_i} \sum_{j=1}^M \pi^x_{ij}x_j\Big)\quad
	\text{subject to} \quad \\
	& \pi^x, \pi^y\geq 0, \quad 
	\forall i: \sum_{j=1}^M\pi^x_{ij} = \sum_{k=1}^N \pi^y_{ik} = \mu_i,\quad
	\forall j: \sum_{i=1}^M\pi^x_{ij}=\nu_j,\quad
	\forall i: \sum_{j=1}^M \pi^x_{ij}x_j = \sum_{k=1}^N \pi^y_{ik} y_k.
\end{align}
The equivalence of \eqref{eq:weakOTbarycentric} and \eqref{eq:weak_inflated} is easy to verify:
Since we required $\conv(X)\subset \conv(Y)$, for any feasible plan $\pi^x$, there exists a plan $\pi^y$, such that for all $i=1, \dots, M,$
\begin{equation}
\sum_{k=1}^N \pi^y_{ik} = \mu_i
\quad \text{and}\quad
\frac{1}{\mu_i} \sum_{k=1}^N \pi^y_{ik} y_k
= \frac{1}{\mu_i} \sum_{j=1}^M \pi^x_{ij} x_j.
\end{equation}

Next, \eqref{eq:weak_inflated} is relaxed by employing Jensen's inequality: By the last constraint, it holds for the cost terms in \eqref{eq:weak_inflated} for any feasible solution that
\begin{equation}\label{eq:jensen_step}
	c\Big( x_i, \frac{1}{\mu_i} \sum_{j=1}^M \pi^x_{ij}x_j \Big)
	= 
	c\Big( x_i, \frac{1}{\mu_i} \sum_{k=1}^N \pi^y_{ik} y_k \Big)
	\leq \frac{1}{\mu_i} \sum_{k=1}^N \pi^y_{ik} c(x_i, y_k).
\end{equation}
Thus, we can state the relaxed problem as
\begin{align}\label{eq:weak_relaxed}
	&\min_{\substack{\pi^x\in \R^{M\times M} \\ \pi^y\in \R^{M\times N}}}
	\sum_{ik} \pi^y_{ik}c_{ik}\quad
	\text{subject to} \quad \\
	& \pi^x, \pi^y\geq 0, \quad 
	\forall i: \sum_{j=1}^M\pi^x_{ij} = \sum_{k=1}^N \pi^y_{ik} = \mu_i,\quad
	\forall j: \sum_{i=1}^M\pi^x_{ij}=\nu_j,\quad
	\forall i: \sum_{j=1}^M \pi^x_{ij}x_j = \sum_{k=1}^N \pi^y_{ik} y_k,
\end{align}
where $c_{ik}=c(x_i, y_k)$, such that it holds for any feasible solution that $V(\mu, \nu) = \eqref{eq:weak_inflated}\leq \eqref{eq:weak_relaxed}$.

Before adding entropic regularization and deriving the algorithm for this problem, we check that we can approximate \eqref{eq:weak_inflated} by \eqref{eq:weak_relaxed} by choosing $Y$ to be some fine enough approximation of $\conv(X)$, such that the optimal $m_i$ in \eqref{eq:weakOTbarycentric} can be closely approximated by points in $Y$.
Note that we cannot expect convergence of the plan $\pi^x$ itself when refining $Y$ in general, as the solution of \eqref{eq:weakOTbarycentric} is already not unique: Consider
\begin{equation}
	\mu = \frac 1 2 (\delta(1, 0)+\delta(-1, 0)), \qquad
	\nu = \frac 1 3 (\delta(0, 1) + \delta(0, 0) + \delta(0, -1)).
\end{equation}
The corresponding problem \eqref{eq:weakOTbarycentric} is clearly solved by many different transport plans.
However, if $c$ is strictly convex in the second argument, then the cost is strictly convex in the target means $m_i$, such that the optimal means are unique.
In this case, the approximate means converge to the optimal ones.
%
\begin{proposition}
	Let $U\supset \conv(X)$ be an open neighborhood.
	Let $Y_n=\{ y_1, \dots, y_{N(n)} \}$ be such that $U\subset \conv(Y_n)$ for all $n\in \N$ and $\dist(U, Y_n)\to 0$, where
	\begin{equation}
	\dist(A, B) \coloneqq \sup_{a\in A}\inf_{b\in B} \Vert a-b\Vert
	\end{equation}
	is a lower bound to the Hausdorff distance.
	Furthermore, let $c\colon \R^d\times \R^d\to \R$ be convex in the second argument, and denote by $(\hat\pi^{x, n},\hat\pi^{y, n})$ an optimal solution of \eqref{eq:weak_relaxed} with respect to $Y_n$, $n\in \N$ with corresponding means
	\begin{equation}
	\hat m_i^n \coloneqq \frac{1}{\mu_i} \sum_{j=1}^M \hat\pi^{x, n}_{ij}x_j = \frac{1}{\mu_i}\sum_{k=1}^N \hat\pi^{y, n}_{ik} y_k, \quad i=1, \dots, M.
	\end{equation}
	Then it holds
	\begin{equation}\label{eq:weak_convergence}
		\lim_{n\to \infty}\sum_{i=1}^M \mu_i c(x_i, \hat m_i^n)
		= \lim_{n\to \infty} \sum_{ik} \hat\pi^{y, n}_{ik}c_{ik}
		= V(\mu, \nu).
	\end{equation}
	Furthermore, if $c$ is strictly convex in the second argument, then the optimal $\hat m_i$ in \eqref{eq:weakOTbarycentric} are unique, and it holds for $i=1, \dots, M$ that
	\begin{equation}
		\lim_{n\to \infty} \hat m_i^n = \hat m_i.
	\end{equation}
\end{proposition}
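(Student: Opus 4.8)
The plan is to sandwich the optimal value of the relaxed problem \eqref{eq:weak_relaxed} (taken with respect to $Y_n$) between $V(\mu,\nu)$ and a perturbation of $V(\mu,\nu)$ that vanishes as $n\to\infty$, and then to upgrade the convergence of values to convergence of the means by a convexity/compactness argument. For the lower bound, note first that the constraints of \eqref{eq:weak_relaxed} force $\hat\pi^{x,n}\in\Pi(\mu,\nu)$ with row-barycenters $\hat m_i^n$, so $\hat\pi^{x,n}$ is admissible in \eqref{eq:weakOTbarycentric} and hence $V(\mu,\nu)\le\sum_i\mu_i c(x_i,\hat m_i^n)$. Applying the Jensen estimate \eqref{eq:jensen_step} to the optimal pair $(\hat\pi^{x,n},\hat\pi^{y,n})$ gives $\sum_i\mu_i c(x_i,\hat m_i^n)\le\sum_{ik}\hat\pi^{y,n}_{ik}c_{ik}$, and the right-hand side is exactly the optimal value of \eqref{eq:weak_relaxed}. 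Thus for every $n$
\[
V(\mu,\nu)\ \le\ \sum_{i=1}^M\mu_i c(x_i,\hat m_i^n)\ \le\ \sum_{ik}\hat\pi^{y,n}_{ik}c_{ik}.
\]

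For the matching upper bound I would fix an optimal plan $\hat\pi$ of \eqref{eq:weakOTbarycentric} (which exists: $\Pi(\mu,\nu)$ is compact and $c$, being a finite convex function on $\R^d$, is continuous) with barycenters $\hat m_i\in\conv(X)$, and build, for each $i$, a probability vector $\lambda^n_{i\cdot}$ supported on $Y_n$ with $\sum_k\lambda^n_{ik}y_k=\hat m_i$ and support contained in a ball $B(\hat m_i,r_n)$ with $r_n\to0$. Granting this, $(\hat\pi,(\mu_i\lambda^n_{ik})_{ik})$ is feasible for \eqref{eq:weak_relaxed}, so its optimal value is at most $\sum_i\mu_i\sum_k\lambda^n_{ik}c(x_i,y_k)\le\sum_i\mu_i\max_{y\in B(\hat m_i,r_n)}c(x_i,y)$, which tends to $\sum_i\mu_i c(x_i,\hat m_i)=V(\mu,\nu)$ by continuity of $c$. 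Combined with the sandwich above, this proves \eqref{eq:weak_convergence}. The construction of $\lambda^n_{i\cdot}$ uses that $\conv(X)$ is compact and $U$ is open, so $\conv(X)$ lies at a positive distance $3\delta$ from $\R^d\setminus U$: fixing a reference simplex $\{v_0,\dots,v_d\}$ with the origin in its interior, for a radius $t_n\le\delta$ the points $q^{(i)}_l:=\hat m_i+t_n v_l/\max_l\|v_l\|$ lie in $U$, hence within $\dist(U,Y_n)=:\eps_n$ of some $y^{(i)}_l\in Y_n$; since $\hat m_i$ sits at depth of order $t_n$ inside $\conv\{q^{(i)}_l\}$, perturbing the vertices by $\eps_n\ll t_n$ keeps $\hat m_i$ inside $\conv\{y^{(i)}_l\}$, and a Carathéodory representation there yields $\lambda^n_{i\cdot}$ with support in $B(\hat m_i,t_n+\eps_n)$. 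Choosing $t_n\to0$ with $\eps_n/t_n\to0$ (possible because $\eps_n\to0$) gives $r_n:=t_n+\eps_n\to0$.

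For the last assertion, suppose $c$ is strictly convex in the second argument. Let $\mathcal M\coloneqq\{(\tfrac1{\mu_i}\sum_j\pi_{ij}x_j)_i:\pi\in\Pi(\mu,\nu)\}$ be the set of achievable mean-vectors; it is the image of the compact convex set $\Pi(\mu,\nu)$ under a linear map, hence compact and convex, and \eqref{eq:weakOTbarycentric} equals $\min_{m\in\mathcal M}\sum_i\mu_i c(x_i,m_i)$. Since each $c(x_i,\cdot)$ is strictly convex and $\mu_i>0$, the objective $m\mapsto\sum_i\mu_i c(x_i,m_i)$ is strictly convex on the convex set $\mathcal M$, so it has a unique minimizer $\hat m=(\hat m_i)_i$ — this is the uniqueness claim. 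For convergence, $(\hat m_i^n)_i\in\mathcal M$ and $\mathcal M$ is compact, so every subsequence has a further subsequence along which $\hat m_i^{n_l}\to m_i^\ast$ for all $i$; by the first display and continuity, $\sum_i\mu_i c(x_i,m_i^\ast)=V(\mu,\nu)=\min_{m\in\mathcal M}\sum_i\mu_i c(x_i,m_i)$, so $m^\ast=\hat m$ by uniqueness. As every subsequence has a sub-subsequence converging to $\hat m$, the whole sequence $(\hat m_i^n)_i$ converges to $\hat m$.

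The only non-routine step is the localization in the upper-bound argument: the density hypothesis $\dist(U,Y_n)\to0$ provides points of $Y_n$ near any target, but not an exact convex representation of $\hat m_i$ by nearby points, and one has to check the stability statement that perturbing the vertices of a small simplex around $\hat m_i$ by an amount small relative to the simplex radius leaves $\hat m_i$ in the convex hull. Making the rate coupling $\eps_n\ll t_n\to0$ precise — crucially using that every $\hat m_i$ lies uniformly deep inside the open set $U$ — is where the real work lies; the rest is elementary convexity and compactness.
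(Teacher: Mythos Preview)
Your proof is correct and follows essentially the same route as the paper: the same sandwich $V(\mu,\nu)\le\sum_i\mu_i c(x_i,\hat m_i^n)\le\sum_{ik}\hat\pi^{y,n}_{ik}c_{ik}$, the same competitor construction by writing each $\hat m_i$ as a convex combination of nearby points of $Y_n$ (you use a $(d{+}1)$-point simplex, the paper uses $2^d$ hypercube corners), and the same continuity conclusion. Your treatment of the perturbation stability and of the second assertion via compactness and sub-subsequences is in fact more careful than the paper's, which handles both points somewhat loosely.
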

\begin{proof}
	Let $\hat\pi$ be an optimal plan in \eqref{eq:weakOTbarycentric} and $(\hat\pi^{x, n},\hat\pi^{y, n})$ optimal in \eqref{eq:weak_relaxed} with respect to $Y_n$.
	Denote the corresponding means by
	\begin{equation}
	\hat m_i \coloneqq \frac{1}{\mu_i} \sum_{j=1}^M \hat\pi_{ij}x_j, \quad \text{and} \quad
	\hat m_i^n \coloneqq \frac{1}{\mu_i} \sum_{j=1}^M \hat\pi^{x, n}_{ij}x_j = \frac{1}{\mu_i} \sum_{k=1}^N \hat\pi^{y, n}_{ik}y_k, \quad
	i=1, \dots, M,
	\end{equation}
	respectively.
	We construct a set of feasible, not necessarily optimal plans $\tilde\pi^n \coloneqq (\tilde\pi^{x, n},\tilde\pi^{y, n})$ for \eqref{eq:weak_relaxed}, for which we will also have
	\begin{equation}
	\lim_{n\to \infty} \sum_{ik} \tilde\pi^{y, n}_{ik}c_{ik} = V(\mu, \nu),
	\end{equation}
	such that using \eqref{eq:jensen_step}, the assertion \eqref{eq:weak_convergence} follows from
	\begin{equation}
		V(\mu, \nu)
		= \sum_{i=1}^M \mu_i c(x_i, \hat m_i)
		\leq  \sum_{i=1}^M \mu_i c(x_i, \hat m_i^n)
		\leq \eqref{eq:weak_relaxed}
		= \sum_{ik} \hat\pi^{y, n}_{ik}c_{ik}
		\leq \sum_{ik} \tilde\pi^{y, n}_{ik}c_{ik}.
	\end{equation}
	To this end, for every $i=1, \dots, M$, take a sequence of point sets $\smash{(y_{i, k}^n)^{n\in \N}_{k=1, \dots, 2^d}\subset Y_n}$, such that for every $k=1, \dots, 2^d$, we have $y_{i, k}^n\to \hat m_i$ for $n\to \infty$, and 
	\begin{equation}
		\sum_{k=1}^{2^d} \lambda_{i, k}^n y_{i, k}^n = \hat m_i, \qquad
		\sum_{k=1}^{2^d} \lambda_{i, k}^n=1.
	\end{equation}
	We see that this is possible by the assumptions on $(Y_n)_{n\in \N}$ as follows:
	Consider a hypercube $Q_\eps$ with side length $\eps$ and center $m_i$.
	Then by assumption, we can choose $n\in \N$ large enough, such that there exist $y_{i, k}^n\in Y_n$, $k=1, \dots, 2^d$, close enough the corners of $Q_\eps$, such that $\hat m_i$ is a convex combination of the $y_{i, k}^n$.
	Then $y_{i, k}^n\to \hat m_i$ with $\eps\to 0$.
	
	Now let a transport plan $\tilde\pi^{y, n}$ for every $n\in \N$ be defined by
	\begin{equation}
		\tilde\pi^{y, n} \coloneqq \sum_{i=1}^M \mu_i \sum_{k=1}^{2^d} \lambda_{i, k}^n \delta(x_i, y_{i, k}^n)
	\end{equation}
	and set $\tilde\pi^n \coloneqq (\hat\pi, \tilde\pi^{y, n})$, which is feasible in \eqref{eq:weak_relaxed} by construction.
	Since $c$ is convex and hence continuous in the second argument, and since $\smash{y_{i, k}^n\to \hat m_i}$ for $n\to \infty$, for every $\delta>0$, we can choose $n\in \N$ large enough, such that for every $i=1, \dots, M$, $k=1, \dots, 2^d$, it holds
	\begin{equation}
		\vert c(x_i, y_{i, k}^n) - c(x_i, \hat m_i)\vert < \delta.
	\end{equation}
	But then it holds	
	\begin{align}
		\Big\vert \sum_{ik}\tilde\pi_{ik}^{y, n}c_{ik} - V(\mu, \nu) \Big\vert
		&= \Big\vert \sum_{ik}\tilde \pi_{ik}^{y, n} c_{ik} - \sum_{i=1}^M \mu_i c(x_i, \hat m_i) \Big\vert \\
		&\leq \sum_{i=1}^M \mu_i \sum_{k=1}^{2^d} \lambda_{i, k}^n\vert c(x_i, y_{i, k}^n) - c(x_i, \hat m_i)\vert
		<\delta.
	\end{align}
	Thus, the first assertion follows.
	The second assertion follows directly from the first together with strict convexity of the cost in \eqref{eq:weakOTbarycentric} in the means $m_i$.
\end{proof}
\begin{remark}
	Applying Jensen's inequality to weak barycentric OT \eqref{eq:weakOTbarycentric} directly without the auxiliary plan $\pi^y$ just results in Monge--Kantorovich OT \eqref{eq:monge_kantorovich}, which is not an approximation.
\end{remark}

\subsubsection{Algorithm Derivation}

Next, we solve \eqref{eq:weak_relaxed} using entropic regularization as before.
Note that
\begin{equation}
	-E(\pi^x)
	= \sum_{i, j=1}^M \pi^x_{ij}(\log\pi^x_{ij}-1)
	= \sum_{i, j=1}^M (\pi^x_{ij}\log\pi^x_{ij}-\pi^x_{ij}+1) - M^2
	= \KL(\pi^x, \one_{M\times M}) - M^2,
\end{equation}
such that for $\eps>0$, $K_{ik} \coloneqq \exp(-c_{ik}/\eps)$, we consider
\begin{equation}\label{eq:weak_entropic}
	\argmin_{\substack{\pi^x\in \R^{M\times M} \\ \pi^y\in \R^{M\times N}}}
	\KL(\pi^y, K) - E(\pi^x)
	= \argmin_{\substack{\pi^x\in \R^{M\times M} \\ \pi^y\in \R^{M\times N}}}
	\KL\Big( \frac 1 2 \begin{bmatrix}
	\vec(\pi^x) \\
	\vec(\pi^y)
	\end{bmatrix},
	\begin{bmatrix}
	\vec(\one_{M\times M}) \\
	\vec(K)
	\end{bmatrix}\Big)
\end{equation}
subject to the constraints from before in \eqref{eq:weak_relaxed}, which is an information problem of the form \eqref{eq:problem}.

Next, we derive the iterations of Algorithm~\ref{alg:gis_pkl_mix} for our problem \eqref{eq:weak_entropic} at hand.
Note that the we rescaled the optimization variable by $\frac 1 2$ for it to be a probability distribution.
However, when rescaling all the right hand sides $b$ of all our constraints accordingly, we will see that this just rescales all algorithm iterates by $\frac 1 2$.
In particular, it will converge to half of the result in the same number of iterations, which is why we can drop this rescaling in the following for convenience.

Set $x\coloneqq (x_1, \dots, x_M)\in \R^M$ and $y\coloneqq (y_1, \dots, y_N)\in \R^N$ as row vectors, which is again straightforward to generalize to higher dimensions $d$.
As before, the marginal projections are given by scaling \eqref{eq:scaling}.
It only remains to derive the GIS iteration for the mean consistency constraint that links $\pi^x$ and $\pi^y$, which we rewrite as
\begin{equation}\label{eq:weak_constr_rewritten}
	0
	= \sum_{j=1}^M \frac{\pi^x_{ij}}{\mu_i} x_j - \sum_{k=1}^N \frac{\pi^y_{ik}}{\mu_i} y_k
	= \sum_{j=1}^M \bar\pi^x_{ij} x_j + \sum_{k=1}^N \bar\pi^y_{ik} (-y_k),
\end{equation}
where $\bar\pi^x$, $\bar\pi^y$ denote the row-normalized plans with $\sum_{j=1}^M \bar\pi^x_{ij} = \sum_{k=1}^N \bar\pi^y_{ik} = 1$, $i=1, \dots, M$.
In order to obtain the positivity and column stochasticity requirements for GIS, as usual we perform a reparametrization of the affine subspace.
To this end, let
\begin{equation}
\xi^x\coloneqq \min_j x_j, \qquad
\xi^y\coloneqq \min_k -y_k, \qquad
\xi' \coloneqq \max\{ \max_j x_j-\xi^x, \; \max_k -y_k -\xi^y \}
\end{equation}
and rewrite \eqref{eq:weak_constr_rewritten} as
\begin{equation}
-\frac{\xi^x+\xi^y}{\xi'}
= \sum_{j=1}^M \bar\pi^x_{ij} \cdot \frac{x_j-\xi^x}{\xi'} + \sum_{k=1}^N \bar\pi^y_{ik} \cdot \frac{-y_k-\xi^y}{\xi'}
\end{equation}
and
\begin{equation}
2+\frac{\xi^x+\xi^y}{\xi'}
= \sum_{j=1}^M \bar\pi^x_{ij} \cdot \Big( 1-\frac{x_j-\xi^x}{\xi'} \Big) + \sum_{k=1}^N \bar\pi^y_{ik} \cdot \Big( 1+\frac{y_k+\xi^y}{\xi'} \Big),
\end{equation}

%
%
%
%
%
which is for all $i=1, \dots, M$ more compactly written as
\begin{equation}
	A^x(\bar\pi^x)^\tt + A^y(\bar\pi^y)^\tt = b,
\end{equation}
with
\begin{equation}\label{eq:weak_constraint_mats}
	b = \begin{bmatrix}
	-(\xi^x + \xi^y)/\xi' \\
	2+(\xi^x + \xi^y)/\xi'
	\end{bmatrix},
	\qquad
	A^x = \begin{bmatrix}
	(x - \xi^x)/\xi' \\
	1 - (x - \xi^x)/\xi'
	\end{bmatrix},
	\qquad
	A^y = \begin{bmatrix}
	(-y-\xi^y)/\xi' \\
	1 + (y + \xi^y)/\xi'
	\end{bmatrix}.
\end{equation}
As mentioned above, while $A^x$, $A^y$ are column-stochastic, $b$ sums to two since $[\bar\pi^x_{i, :}, \bar\pi^y_{i, :}]^\tt$ sums to two for every $i=1, \dots, M$.
Setting
\begin{equation}
	z \gets \log\Big( \frac{b}{A^x(\bar\pi^x)^\tt + A^y(\bar\pi^y)^\tt} \Big),
\end{equation}
similar to the case with martingale OT in Section~\ref{sec:martOT_alg_derivation}, the GIS iterations for all $i=1, \dots, M$ can be done in parallel by computing
\begin{equation}
	\pi^x \gets \pi^x\odot \exp(z^\tt A^x), \quad
	\pi^y \gets \pi^y\odot \exp(z^\tt A^y).
\end{equation}
Furthermore, as with martingale OT, the factors $\bar\pi^x_{ij}/\pi^x_{ij} = \bar\pi^y_{ik}/\pi^y_{ik} = \mu_i$ cancel out if the $\mu$-update is performed after this GIS iteration.
Altogether, this leads to Algorithm~\ref{alg:weakOT}.



\begin{algorithm}[htb]
	\begin{algorithmic}
		\State \textbf{Input:} $\mu,\nu\in \R^M$, $c\in \R^{M\times N}$, $\eps >0$
		\State Set $b$, $A^x$, $A^y$ as in \eqref{eq:weak_constraint_mats}
		\State $\pi^x\gets \one_{M\times M}$
		\State $\pi^y\gets \exp(-c/\eps)$ 
		\While{not converged}
		\vspace{-\bigskipamount}
		\State \begin{align}
		\pi^x &\gets \pi^x\diag(\nu/ (\pi^x)^\tt \one_M) \\
		z &\gets \log(b) - \log( A^x(\pi^x)^\tt + A^y(\pi^y)^\tt ) \\
		\pi^x &\gets \pi^x\odot \exp(z^\tt A^x)\\
		\pi^y &\gets \pi^y\odot \exp(z^\tt A^y)\\
		\pi^x &\gets \diag(\mu/ \pi^x \one_M)\pi^x \\
		\pi^y &\gets \diag(\mu/ \pi^y \one_M)\pi^y
		\end{align}
		\EndWhile
		\State \textbf{Output:} $\pi^x$
		\caption{Sinkhorn/GIS algorithm for martingale OT}
		\label{alg:weakOT}
	\end{algorithmic}	
\end{algorithm}

\subsubsection{Numerical Example}

We use the example from Section~\ref{sec:martOT_example}.
We aim to compare our algorithm with the weak OT solver from the Python OT package \cite{flamary2021pot}\footnote{\url{https://pythonot.github.io/gen_modules/ot.weak.html}. Accessed: 2.12.2022.}.
This program solves the unregularized barycentric weak OT problem for the cost function $c(x, x') = \Vert x-x'\Vert^2$ using a conditional gradient scheme. 
For our proposed algorithm, it is sufficient to choose $Y=X$, and we take $\eps=10^{-10}$.
Although both algorithms optimize another functional, for sake of comparison, we choose to terminate both algorithms once the absolute change in the weak OT cost \eqref{eq:weakOTbarycentric} is less than $10^{-9}$, which is the default setting in the weak OT solver.
The proposed algorithm terminates in approximately half of the time compared to the POT solver.

\begin{figure}[htb]
	\centering
	\includegraphics[width=\textwidth]{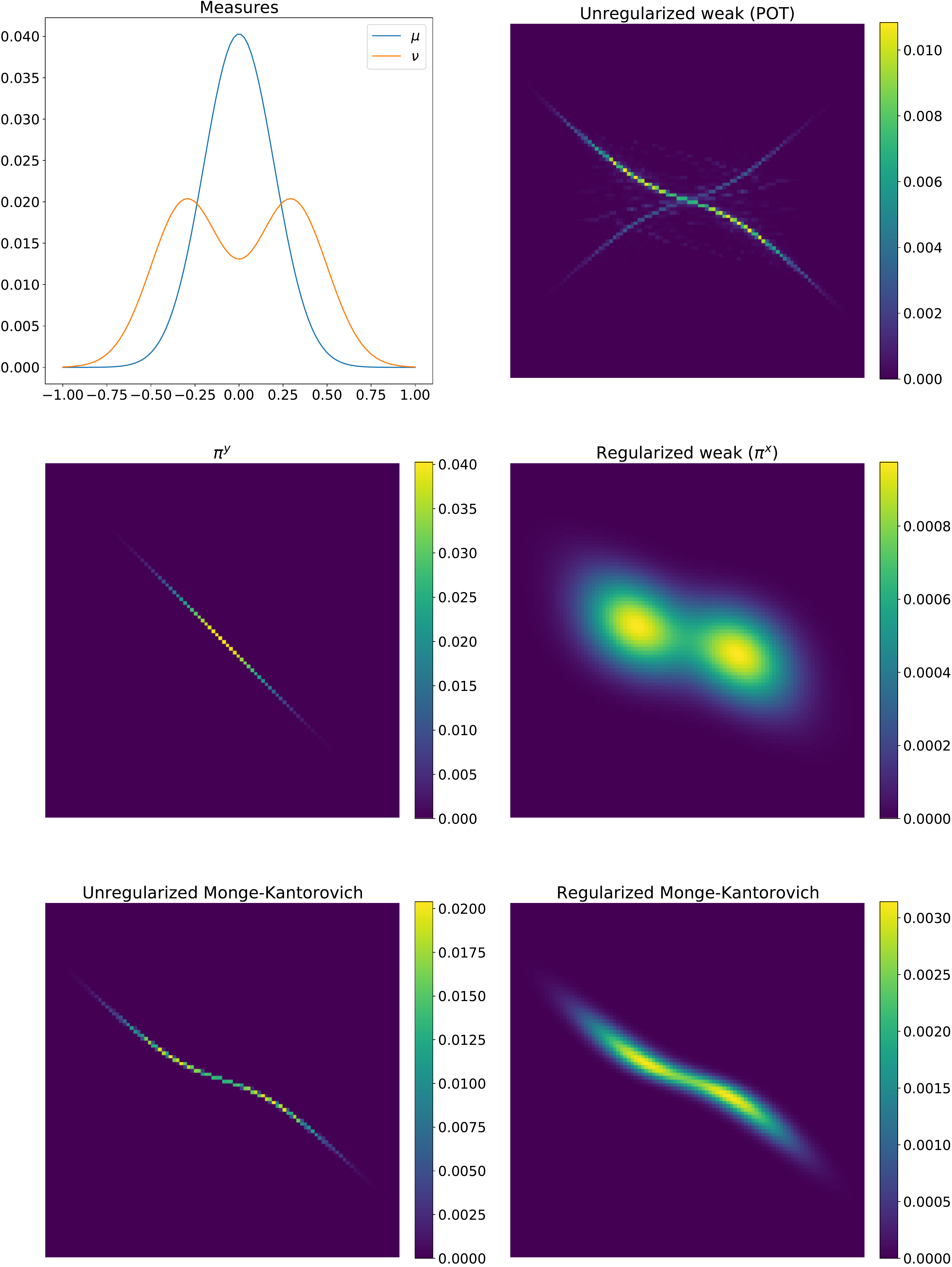}
	\caption{
		Top left: Measures $\mu$, $\nu$ in problem~\eqref{eq:weakOT}.
		Top right: Unregularized solution computed using POT solver.
		Middle left: Auxiliary plan $\pi^y$ computed by Algorithm~\ref{alg:weakOT}.
		Middle right: Regularized barycentric weak OT plan $\pi^x$ computed by Algorithm~\ref{alg:weakOT}.
		Bottom left: Standard OT plan with cost $c(x, y)=\Vert x-y\Vert^2$.
		Bottom right: As bottom left, with entropic regularization parameter $0.01$.
	}
	\label{fig:weakOT}
\end{figure}

The resulting transport plans are shown in Figure~\ref{fig:weakOT}.
We also show the transport plans from standard OT with and without regularization to highlight the differences, where we chose $0.01$ as the weight of the entropic regularization.
The auxiliary plan $\pi^y$ is almost exclusively supported on the diagonal, as can be expected from the problem \eqref{eq:weak_entropic} and the small choice of $\eps=10^{-10}$.
The weak OT cost of $\pi^x$ is around $5.15\cdot 10^{-8}$, whereas the POT solver's plan has a cost of $3.78\cdot 10^{-7}$.
Thus, while looking quite different qualitatively, both plans achieve a cost close to zero.
That is, the plans are very close to being martingale OT plans, despite the additional entropy term in \eqref{eq:weak_entropic}.
The unregularized weak OT plan preserves some of the features of the standard OT, having a lot of mass being almost supported on a graph.
Moreover, there is a faint reflected graph visible, transporting in the opposite direction in order to obtain means $m_i$ close to $x_i$, as well as some ``noisy'' artefacts.
On the other hand, the regularized plan is very smooth, looking somewhat like a mixture of almost isotropic Gaussians.
This can be explained as follows: Since there are many plans with the same optimal $m_i$ and hence the same cost, the regularization with $-E(\pi^x)$ in \eqref{eq:weak_entropic} will result in the smoothest possible one subject to the marginal and mean constraints.
Whether this is desirable or not, depends on the application: It loses the sparsity structure of the weak OT plan, but smoothes out the computational artifacts resulting from the ambiguity of the plan.
Whether this blur can be reduced by choosing the optimization objective differently without harming the barycentric weak OT cost \eqref{eq:weakOTbarycentric} is left for future research.
Moreover, it would be interesting to show whether adding the entropy term preserves the optimal means $m_i$ in general.


\subsection{Unbalanced Optimal Transport}\label{sec:unbalanced}

Suppose that we want to compute an optimal transport between measures with $\Vert \mu\Vert_1\neq \Vert \nu\Vert_1$.
This is the notation of so-called \emph{unbalanced optimal transport (UOT)}, which is relevant in real-world applications with noisy data.
Next to the approach of replacing the ``hard'' mass constraints given by \eqref{eq:marg_constraints} by a ``soft'' penalization with $\varphi$-divergences $D_\varphi$, also called $f$- or Csisz\'ar divergences, there is another approach with a linear objective and affine constraints called \emph{conic formulation} of UOT \cite{unbalanced18CPSV,unbalanced18LMS}.
They are equivalent for $D_\varphi=\KL$ and a certain choice of cost function, in which case the corresponding UOT distance is called the Hellinger--Kantorovich distance, which is a geodesic distance characterizing the weak-$*$ convergence on the positive measures.
We refer to \cite[Sec.~3]{HKbarys21FMS} for a concise collection of its different formulations.
It is proposed in \cite{toric22STVR} to solve a discretized conic formulation using GIS, which we only restate here in slightly modified form.

The central idea is to lift two given histograms $\mu,\nu\in \smash{\R^M_{\geq 0}}$
to row-stochastic matrices $\tilde \mu,\tilde\nu\in \smash{\R^{M\times N}_{\geq 0}}$ such that
\begin{equation}
	\sum_{k=1}^N ks\cdot \tilde\mu_{ik} = \mu_i \quad \text{and} \quad
	\sum_{k=1}^N\tilde\mu_{ik} = 1, \quad i=1, \dots, M,
\end{equation}
similarly for $\nu$, where $s>0$ is a fixed unit of mass.
Thus, an entry $\tilde\mu_{ik}$ is interpreted as a weight for having $k$ $s$-units of mass at location $i$.
This is clearly an over-parametrization: For example, $\tilde\mu_{i, :}=[2, 0]$ is equivalent to $\tilde\mu_{i, :}=[0, 1]$.
In the following, we drop the scaling constant $s$ by considering the rescaled $\mu/s,$ $\nu/s$ instead.

Using this idea, we formulate an unbalanced OT problem using a cost function $c\in \R^{(M\times N)\times(M\times N)}$,
where a value $c_{ikjl}$ is interpreted as the cost of generating $l$ units of mass at location $j$ from $k$ units of mass at location $i$.
Then we want to solve the linear OT problem
\begin{equation}\label{eq:uot}
	\min_{\pi\geq 0} \sum_{ikjl} c_{ikjl}\pi_{ikjl}
\end{equation}
subject to $\sum_{ikjl}\pi_{ikjl}=1$ and the marginal constraints
\begin{equation}\label{eq:uot_constraints}
	\sum_{j=1}^M\sum_{k=1}^K\sum_{l=1}^L k\cdot\pi_{ikjl} = \mu_i, \quad i=1, \dots, M, \qquad
	\sum_{i=1}^M\sum_{k=1}^K\sum_{l=1}^L l\cdot\pi_{ikjl} = \nu_j, \quad j=1, \dots, M.
\end{equation}
Couplings fulfilling \eqref{eq:uot_constraints} are called \emph{conic couplings}.
Feasibility is always guaranteed, since the coupling defined by
\begin{equation}
	\pi_{ikjl}\coloneqq \frac{\mu_i}{\Vert\mu\Vert_1}\delta(\Vert\mu\Vert_1, k)\cdot \frac{\nu_j}{\Vert\nu\Vert_1}\delta(\Vert\nu\Vert_1, l)
\end{equation}
is conic.

For the numerical solution of \eqref{eq:uot}, as above, the authors in \cite{toric22STVR} add an entropy term to apply GIS.
To this end, they stack all constraints in \eqref{eq:uot_constraints} into one matrix (as opposed to using multiple blocks, see Section~\ref{sec:block_advantages}) and a normalization similarly as in Section~\ref{sec:gis}.
However, the issues regarding the increased computational complexity and memory requirements caused by the lifting approach are not addressed.


\section{Conclusion}\label{sec:conclusion}

In this paper, we showed how entropic regularization can be used for optimal transport problems with affine constraints through GIS.
To this end, we gave a slightly more general algorithm than RBI-SMART, which specifically fits the context of optimal transport.
Our convergence proof is an adaptation of Csisz\'ars interpretation of GIS \cite{geomintDR89C}.
We specialized this algorithm to several problems from optimal transport.

The strategy in this paper could be applied to several other problems as well, such as
\begin{itemize}
	\item discretized moment-constrained optimal transport problems as in \cite{momentcoulomb21ACEL}, that were introduced as an approximation to the multi-marginal optimal transport problem with Coulomb costs motivated by quantum chemistry,
	\item approximate computation of the atomic Wasserstein distance proposed in \cite{fourierOT20C} that controls the Fourier coefficients of both measures, or
	\item generalized barycenters with marginals living on different subspaces \cite{genWbary21DGS}.
	For a nearest-neighbor discretization of the Radon transform, this can be computed with iterative scalings as in \eqref{eq:scaling} as in \cite{IBP15BCC,tomographic17AABC}.
	These methods could be generalized by the proposed framework to, e.g., linear interpolation discretizations as in \cite{parallelRadiation88C}.
	We remark, however, that this application can also be tackled with the elegant multi-marginal approach proposed in \cite{genWbary21DGS}.
\end{itemize}



Beside these additional applications, several ways to generalize the methods in this paper would be interesting:

\begin{itemize}
	\item It seems straightforward to extend the solution of the minimization problem with moment constraints in Section~\ref{sec:min_problem_moments} to an entropic barycenter problem
	\begin{equation}
		\min_{\nu\in \C(A, b)} \sum_{i=1}^N \lambda_i \OT_\eps(\mu_i, \nu)
	\end{equation}
	subject to the moment constraints, for given measures $\mu_i$, $i=1, \dots, N$ and $\lambda\in \Delta_N$.
	The proposal above is the special case for $N=1$.
	A multi-marginal formulation of this barycenter problem as in \cite{tree21HRCK, UMOT21BLNS} also seems possible.
	
	
	\item As the $\KL$ projection onto a half space lies on the boundary hyperplane, the above could be extended to the case with inequality constraints.
	
	\item Interesting would be a generalization to even more general, convex constraints.
	However, we are not aware of any analog of GIS for this case.
	As a possible application, in Section~\ref{sec:min_problem_moments}, one could also constrain the circular variance
	\begin{equation}
		1-\vert \E[\e^{\imag x}]\vert
	\end{equation}
	or circular standard deviation
	\begin{equation}
		\sqrt{\log(1/(1-\vert \E[\e^{\imag x}]\vert)^2)}
	\end{equation}
	on the torus.
	
	\item Since SMART also works in a more general, regularized setting \cite{smart93B}, it would be interesting to generalize the aforementioned algorithms to unbalanced OT, where the marginal constraints are relaxed to a penalization \cite{unbalancedScaling18CPSV,unbalanced18LMS,UMOT21BLNS}.
	
	
	\item Finally, it is natural to ask for an extension to the continuous setting.
	It is not clear how to do this in general.
	The proof in \cite{idivgeometry75C} for the convergence of iterative information projections uses compactness of a bounded set of probability distributions, which is only true for finite support sets.
	It also uses the Pythagorean identity $\KL(p, r) = \KL(p, q) + \KL(q, r)$, which is not true in general in the continuous realm.

\end{itemize}

\subsubsection*{Acknowledgements}

Many thanks to Florian Beier and Bernhard Schmitzer for fruitful discussions.

\bibliographystyle{abbrv}
\bibliography{literature}

\end{document}